\newenvironment{proof}{\vspace{0.05cm} \nid \emph{Proof.}}{\hfill$\square$ \vspace{0.25cm}}
\newenvironment{Proof}[1]{\vspace{0.25cm} \nid \emph{Proof #1.}}{\hfill$\square$ \vspace{0.25cm}}
\newlist{myQuoteEnumerate}{enumerate}{2}
\setlist[myQuoteEnumerate,1]{label=(\alph*)}
\setlist[myQuoteEnumerate,2]{label=(\alph*)}
\newtheorem{thm}{Theorem}[section]
\newtheorem{thmm}{Theorem}[section]
\newtheorem{cor}{Corollary}[thm]
\newtheorem{lem}{Lemma}[section]
\newtheorem{defi}{Definition}[section]
\newtheorem{remark}{Remark}[section]
\newcommand{\parr}[1]{\vspace{0.1cm} \noindent {\it #1}}
\definecolor{rulecolor}{RGB}{0,71,171}
\definecolor{tableheadcolor}{gray}{0.92}
\newcommand{\nid}{\noindent}
\newcommand\quotient[2]{
        \mathchoice
            {
                \text{\raise1ex\hbox{$#1$}\Big/\lower1ex\hbox{$#2$}}%
            }
            {
                #1\,/\,#2
            }
            {
                #1\,/\,#2
            }
            {
                #1\,/\,#2
            }
    }
\renewcommand\paragraph{\@startsection{paragraph}{4}{\z@} {2.25ex \@plus1ex \@minus.2ex} {-0.7em}{\normalfont\normalsize\bfseries}} \makeatother
\definecolor{aurometalsaurus}{rgb}{0.43, 0.5, 0.5}
\definecolor{darkjunglegreen}{rgb}{0.1, 0.14, 0.13}
\definecolor{coolblack}{rgb}{0.0, 0.18, 0.39}
\definecolor{cobalt}{rgb}{0.0, 0.28, 0.67}
\title{Dynamics of hyperbolic correspondences}
\author{CARLOS SIQUEIRA}
\begin{document}

\maketitle




\begin{abstract}
This paper establishes the geometric rigidity of certain holomorphic correspondences in the family $(w-c)^q=z^p,$ whose post-critical set is finite in any bounded domain of $\mathbb{C}.$ In spite of being rigid on the sphere, such correspondences are $J$-stable by means of holomorphic motions when viewed as maps of $\mathbb{C}^2.$ The key idea is the association of a conformal iterated function system to the return branches near the critical point, giving a global description of the post-critical set and proving the hyperbolicity of these correspondences.
\end{abstract}

\tableofcontents

\section{Introduction}

Holomorphic correspondences are relations $z\mapsto w$ given by $p(z,w) = 0$, where  $p$ is a polynomial in two complex variables. 
Finitely generated Kleinian groups and rational maps are two types of correspondences. \emph{Matings} between Kleinian groups and rational maps provide a third type. An important topic of the theory consists in deciding when an abstract mating between a polynomial map and a Kleinian group $\Gamma$ can be realised by a holomorphic correspondence. For example, if $\Gamma$ is Hecke group, matings between pinched polynomial-like maps $f$ and $\Gamma$ are generated by holomorphic correspondences if, and only if, $f$ is Chebyshev-like \cite{Bullett2005}.  Bullett and Penrose \cite{Bullett1994} discovered a family of matings whose connectedness locus is (conjecturally) homeomorphic to the Mandelbrot set. This phenomenon suggests an interesting \emph{dictionary} between the dynamics of certain holomorphic correspondences (e.g.  matings), rational maps  and Kleinian groups \cite{Bullett2001}.   The theory of holomorphic motions \cite{SS17}  has already been extended  to the family $\mathbf{f}_c$ of all multifunctions $z\mapsto w$  determined by 
 \begin{equation}(w-c)^{q} = z^p,\end{equation} where $p>q.$ This is  a \emph{formal} generalisation of the quadratic family. If \label{tvbc} $\beta = p/q$ in reduced form, then $\mathbf{f}_c(z) =z^{\beta} +c,$ where  $z^{\beta} = \exp \beta \log(z)$.   
We shall see in this paper that each $\mathbf{f}_c$ presents features of both polynomial maps and  \emph{conformal iterated function systems (CIFS).}  

{\subsection{Rigidity.} A dynamical system is described as rigid if every deformation either keeps the system unchanged (up to isomorphism), or else radically alters its dynamics, for example by destroying topological conjugacy.

Post-critically finite rational maps are rigid. Critically finite correspondences (those for which the full orbit of every critical point is finite) are rigid; indeed in the quadratic case there are only eleven critically finite correspondences up to conformal conjugacy (see Bullett \cite{Bullett92}).

{Julia  sets of hyperbolic rational maps $f$ are \emph{geometrically rigid} in the sense that if $J(f)$ is a Jordan curve, then either $J(f)$ is a circle in $\hat{\mathbb{C}}$ or $$ \dim_{H} J(f) > 1,$$ where $\dim_H$ stands for Hausdorff dimension. (More recently, Urba\'nski \cite{Urbanski09} has extended this result to a class of meromorphic functions with finitely many singularities).
 In this example,   we cannot deform the circle without transforming it into a fractal. In Theorem \ref{kfmcs} we have another example of geometric rigidity where a finite set cannot be deformed without transforming it into a Cantor set.} 

\subsection{Geometric rigidity for correspondences.} The Julia set $J_c$ can also  be defined for $\mathbf{f}_c$; it is the closure of all repelling cycles (see page \pageref{kfbew} for a detailed reference to all terminology).  The analysis of the post-critical set plays a central role in the dynamics of rational maps, mainly because of the following two properties: $(a)$ the set of attracting cycles is always finite for rational maps $f$ and  $(b)$ every attracting cycle attracts the orbit of a critical point of $f.$ {In general, the closure of the set of attracting cycles of $\mathbf{f}_c$ is uncountably infinite.} Property $(b)$ still holds for the family of holomorphic correspondences $\mathbf{f}_c.$  Therefore, the set of attracting cycles is still very important to study the dynamics of $\mathbf{f}_c$, but its structure can be very complicated, as the following theorem indicates.  For this reason, we define the \emph{dual Julia set} $J_c^*$ as the closure of all  attracting cycles of $\mathbf{f}_c$, except $\infty.$

{The following result shall be restated as Corollary \ref{thbn}.}

\begin{thmm}[Geometric rigidity] \label{kfmcs} {If the critical point of  $\mathbf{f}_a$  has only one bounded forward orbit and if this orbit is necessarily a cycle containing zero,} then the corresponding dual Julia set $J_a^*$ is finite and $J_c^*$ is Cantor set, for every $c$ in a neighbourhood of $a.$

\end{thmm}

 Under the conditions of Theorem \ref{kfmcs}, the postcritical set $P_a$ is finite in any bounded domain of $\mathbb{C}$ and $P_c$ is  accumulated in a Cantor set when $c\neq a$ is sufficiently close to $a.$ Theorefore, Theorem \ref{kfmcs}  is somewhat related to the post-critically finite rigidity of rational maps. As remarked above, critically finite correspondences are rigid. If a correspondence is not critically finite,   there is still the chance that the post-critical set is finite in every bounded domain, i.e., that the only possible accumulation point is $\infty.$  This is precisely the structure of the post-critical set $P_a$  under  the hypothesis of Theorem \ref{kfmcs}.   However, $P_c$ presents uncountably many accumulation points, for any infinitesimal perturbation $c$ of $a.$ This fact could be used to show that $\mathbf{f}_a: \mathbb{C} \to \mathbb{C}$ is not structurally stable {(a precise meaning of \emph{structural stability of multifunctions} involves $\mathbb{C}^2$ extensions -- see \cite{SS17} for a detailed reference). }

\subsection{Conformal iterated function systems.} \label{dfdsd} 
The geometric rigidity established in Theorem \ref{kfmcs} comes from the association of an iterated function system whose limit set is precisely the closure of attracting periodic orbits. We are going to illustrate this idea in a particular example -- see \mbox{Figure \ref{fdt}. } The interplay between iterated function systems and multifunctions is a well known topic in Conformal Dynamics and the situation described in Figure \ref{fdt} is just one instance. See, for example, Bousch \cite{Bousch} M\"unzner \& Rasch \cite{Rasch}, and Przytycki \& Urba\'nski \cite{ConformalF}.

In the following Figure \ref{fdt},  the Julia set of $c_1=-1$ is drawn in black, and some of the bounded components of the Fatou set $\hat{\mathbb{C}} -J_{c_1}$  (which are conformally isomorphic to the unit disk) are coloured in gray, with labels $1, 2, \ldots, 5.$

If we denote the $i$-th component in the figure by $C_i$, then the following sequence of maps of components 
$$C_1 \xrightarrow{\varphi_1} C_2 \xrightarrow{\varphi_2} C_3 \xrightarrow{\varphi_3} C_4 \xrightarrow{\mathbf{f}_c} C_5 \xrightarrow{\varphi_5} C_4 $$
is eventually periodic, where  each  $\varphi_i$ is a conformal isomorphism, for $i=1,2,3,5.$ 

Since the critical point $0$ belongs to $C_4,$ there is no univalent branch defined on $C_4.$  Under  $\mathbf{f}_{c_1}$, every point $z$  in $C_4$  has  two images in $C_5,$   except when $z=0.$  The fifth component is mapped back onto the fourth by a univalent branch $\varphi_5,$ making a cycle of components of length $2.$  We  define the \emph{filled Julia set} $K_c$ as the union of $J_c$ with all bounded components of the Fatou set.  The structure of $K_{c_1}$ can be deduced from Figure \ref{fdt}. 
 \begin{figure}[H]
\centering
 \includegraphics[scale=1.1]{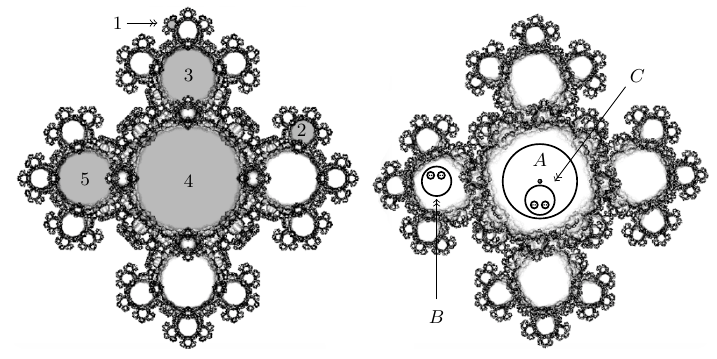}
\caption{Julia sets of $(w+c)^2=z^4$ for $c=-1$ and $c=-1+i/10$, respectively. } 
\label{fdt}
\end{figure}

It turns out that every point in $K_c$ has one bounded forward orbit; on the other hand, every forward orbit of a point in the complement of $K_c$ converges to $\infty$ exponentially fast.   Except for the fifth component (the central one in the figure), every bounded component $U$ of the Fatou set of $\mathbf{f}_{c_1}$  which does not contain the critical point determines two univalent branches of the correspondence; one of them sends $U$ onto another bounded component  contained in $K_c,$ and  the other sends $U$ to the complement of $K_c.$ This gives a global description of the dynamics on the Fatou set.

    For this particular example, every component is pre-periodic and is eventually  mapped into the cycle of components (fourth and fifth disks).   What makes this example especial is that \begin{equation} \label{sdgasd} 0\mapsto -1 \mapsto 0\end{equation} is the only bounded orbit of the critical point.  Under this combinatorial condition, it is possible to prove that every forward orbit in the Fatou set either escapes to infinity or converges to the cycle \eqref{sdgasd}. Since every nonzero point has uncountably many forward orbits, such well organised behaviour is the first indication  of the hyperbolicity of $\mathbf{f}_{c_1}.$

    The right-hand side of this figure displays the Julia set of $\mathbf{f}_{c_2}$, where $c_2$ is a small perturbation of the parameter $c_1$. A Cantor set is produced near the cycle  \eqref{sdgasd}, and every forward orbit of a point in the Fatou set of $\mathbf{f}_{c_2}$ either escapes to infinity or accumulates in this Cantor set. In the following paragraph we shall explain the construction that yields this Cantor set. 
        
    In Figure \ref{fdt}, the correspondence $\mathbf{f}_{c_2}$ maps the disk $A$ around the critical point onto the disk $B,$ which is centred at $c_2.$ Clearly, the mapping $\mathbf{f}_{c_2}$ is not single-valued and every point in $A$ has two images in $B,$ except for the critical point, which is mapped to $c_2.$   In a second step, $\mathbf{f}_{c_2}$ determines two univalent branches on $B;$ one of them sends $B$ into the complement of $K_{c_2},$ and the other branch, say $\varphi$, maps $B$ onto the disk $C.$ The following fact is very important: $C$ is contained in $A$ and also avoids the critical point. Furthermore, the image of $C$ under $\mathbf{f}_{c_2}$ splits into two disks \emph{inside of $B.$} Now $\varphi$ sends the two disks contained in $B$ onto two disks contained in $C.$ The process can be repeated indefinitely, producing two conformal iterated function systems $\mathcal{G}_i$ whose limit sets $\Lambda(\mathcal{G}_i)$  are Cantor sets contained in $B$ and $C$, respectively. 
    
    The general version of the  following result  is Theorem \ref{lqe}.

    \begin{thmm}\label{sdfdc} The limit set $\Lambda$ obtained from the two conformal iterated function systems $\mathcal{G}_i$ above is precisely the closure of all attracting cycles; that is
    $$J_{c_2}^{*} = \Lambda(\mathcal{G}_1) \cup \Lambda(\mathcal{G}_2). $$ Furthermore, the post-critical set $P_{c_2}$ is accumulated in this Cantor set and $\mathbf{f}_{c_2}$ expands the hyperbolic metric of the Riemann surface $\mathbb{C} - P_{c_2}.$ As a consequence, the Julia set is hyperbolic. 
    
    \end{thmm}

    Every correspondence $(w-a)^{2}=z^{2d} $ with  $a^{d-1}=-1$ and $d>1$ displays the same features described in Figure \ref{fdt} and Theorem \ref{sdfdc}.  In all such examples,  $a$ is a simple centre; that is, the critical point has only one bounded orbit and this orbit is a cycle.   In the next section we explain why simple centres are important.

 \subsection{Combinatorial description of the parameter space.} In the quadratic family, the post-critically finite rigidity is related to a special  coding of the hyperbolic components of the interior of the Mandelbrot set $M.$ Every hyperbolic component  $U$ of  the interior of $M$  has a \emph{centre}, corresponding to a post-critically finite $f_c$ determined by the unique $c\in U$ such that $f_c(z)=z^2 +c$ has an attracting cycle with multiplier $\lambda=0.$    See Douady and Hubbard \cite{DH84,DH85}.  Every centre is thus determined by a solution of $f_c^n(0)=0.$  The period $n$ is not fixed, and by collecting all possible solutions of $f_c^n(0)=0$ we obtain a coding of infinitely many components of the interior of $M.$  
 
Although $f_c: \mathbb{C} \to \mathbb{C}$ is  rigid  at the centre  $c\in U,$ all quadratic maps in $U$ are $J$-stable by means of holomorphic motions, see Ma\~n\'e, Sad \& Sullivan \cite{Mane1983}. (Somewhat similar results for $\mathbf{f}_c$ are Theorems \ref{kfmcs} and \ref{qpi}).

 Following the same principle, we say that $c$ is a \emph{centre} for the family $\mathbf{f}_c$  if only finitely many forward orbits of $0$ return to $0$ and the others are discarded to the basin of infinity. Equivalently,  we  say that $c$ is a centre if $\mathbf{g}_c^n(0)=0,$ for some $n>0,$ where $\mathbf{g}_c$ is the multifunction defined on the filled Julia set $K_c,$ given by $\mathbf{g}_c(z) = \mathbf{f}_c(z) \cap K_c.$ (Since   $K_c$ consists of every $z$ with at least one bounded  forward orbit under $\mathbf{f}_c$, we conclude that $\mathbf{g}_c$ is well defined).

 \label{fxmv}

  The study of centres is the easiest approach to the hard problem of classifying topologically the dynamics of all  correspondences in the family. This problem is related to the topological properties of the  \emph{connectedness locus} $M_{\beta}$ of the family $(w-c)^q=z^p,$ where $\beta=p/q$ and $\beta>1.$ By definition, $M_{\beta}$ is the set of all parameters $c$ such that $K_c$ is connected. 
 
 The set  $M_{\beta,0} $ consists of all $c\in \mathbb{C}$ such that $0\in K_c.$ For the quadratic family, this set is equal to $M_{\beta}$. Therefore, the  two sets corresponding to $\beta=2$ are equal to the Mandelbrot set $M.$  Topology and Dynamics are related by the well known  fact: \emph{if the boundary of $M$ is locally connected then hyperbolic maps are dense in the quadratic family. }
 
  In this paper we show that $M_{\beta,0} \subset M_{\beta}.$ Moreover, every centre belongs to $M_{\beta,0}$ and our results concern the dynamical properties of correspondences near centres --  such as Theorem \ref{sopcv} below -- providing an initial picture of  $M_{\beta}$ as a space of dynamical systems. (For more informations on $M_{\beta}$ and a new class of Julia sets named {\it Carpets}, see  \cite{BLS}).

  {The following result is also presented as Theorem \ref{jst}.}
  
  \begin{thmm} \label{sopcv} There is an open set $H_{\beta}$ containing $\mathbb{C} -M_{\beta,0}$ and every simple centre, such that  $c\mapsto J_c$ and $c\mapsto J_c^*$ are continuous on $H_{\beta}$ with respect to  the Hausdorff distance. \end{thmm}   According to Theorem 3.4 of Siqueira \& Smania \cite{SS17}, this theorem implies structural stability and the existence of branched holomorphic motions, {as stated in Theorem \ref{plmd}, which is the general version  of the following result.}

  \begin{thmm}[Holomorphic motions] \label{qpi} If $c_0$ is in $H_{\beta},$ there is a normal branched holomorphic motion $\mathbf{h}_c$ based at $c_0$  and parameterised on $U,$ such that $\mathbf{h}_c(J_{c_0})=J_c,$ for every $c\in U.$ 
  \end{thmm}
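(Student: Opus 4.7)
The plan is to reduce the statement to the general criterion \cite[Theorem 3.4]{SS17}, which upgrades Hausdorff continuity of Julia sets of holomorphic correspondences into a normal branched holomorphic motion. Most of the analytic work has already been absorbed into Theorem \ref{jst}, so my task is to verify the hypotheses of the cited criterion and then package its output in the required form.

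First I would fix $c_0 \in M'_\beta$ and, by openness of $M'_\beta$, choose a disc $U$ around $c_0$ with $U \subset M'_\beta$. Theorem \ref{jst} then supplies Hausdorff continuity of both $c \mapsto J_c$ and $c \mapsto J_c^*$ on $U$, which is precisely the input required to invoke \cite[Theorem 3.4]{SS17}. The openness step is important because normal branched holomorphic motions are parameterised over a domain, not just along a path.

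Next I would construct the candidate motion $\mathbf{h}_c$ from holomorphic inverse branches of $\mathbf{f}_c$. For each $z \in J_{c_0}$ and any backward trajectory $z_n \to z$ under $\mathbf{f}_{c_0}$, membership of $c_0$ in $M'_\beta$ keeps the pullback sites away from the critical values, so each branch used continues holomorphically in $c \in U$. The resulting parameter-dependent points $z_n(c)$ form a normal family of holomorphic discs, whose closure of graphs defines $\mathbf{h}_c$. The equality $\mathbf{h}_c(J_{c_0}) = J_c$ then follows from invariance together with the Hausdorff continuity supplied by Theorem \ref{jst}.

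The main obstacle, and the reason one must invoke \cite[Theorem 3.4]{SS17} rather than imitate the classical $\lambda$-lemma, is normality of the resulting branched motion: one needs the family of local branches to assemble into a closed analytic object whose pointwise limits are themselves branches. In the univalent setting this comes for free from Montel's theorem, but for correspondences two distinct branches can a priori collide with the post-critical set or with each other as $c$ varies. The continuity of $c \mapsto J_c^*$ on $M'_\beta$ rules out precisely these collisions, and this is the clause that makes the cited criterion applicable. The remainder of the argument is then a direct appeal to \cite[Theorem 3.4]{SS17}.
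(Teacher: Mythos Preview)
Your overall route is the same as the paper's: deduce Theorem~\ref{qpi} by combining Theorem~\ref{jst} with \cite[Theorem~3.4]{SS17}. However, you have the roles of the two ingredients slightly scrambled. In the paper, the \emph{input} to \cite[Theorem~3.4]{SS17} is not Hausdorff continuity but the fact that $J_{c_0}$ is a hyperbolic repeller (Theorem~\ref{lqp}); the cited theorem then produces a normal branched holomorphic motion $\mathbf{h}$ with $\mathbf{h}_c(J_{c_0})=\Omega\cap J_c$ for some open $\Omega\supset J_{c_0}$. The Hausdorff continuity of $c\mapsto J_c$ from Theorem~\ref{jst} is used \emph{afterwards}, to upgrade this local conclusion to the global equality $\mathbf{h}_c(J_{c_0})=J_c$ (by shrinking $U$ so that $J_c\subset\Omega$). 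Your first paragraph asserts that continuity is ``precisely the input required to invoke'' the cited criterion, which mismatches its actual hypothesis; you never cite the hyperbolicity of $J_{c_0}$ at all.

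Your second and third paragraphs, sketching the construction of $\mathbf{h}$ via analytic continuation of inverse branches and arguing for normality, are not needed: that machinery is internal to \cite[Theorem~3.4]{SS17}, and the paper simply quotes its output. Likewise, the continuity of $c\mapsto J_c^*$ plays no role in the paper's argument for Theorem~\ref{qpi}; only $c\mapsto J_c$ is used, and only for the local-to-global step. So the fix is minor: cite Theorem~\ref{lqp} to verify the hypothesis of \cite[Theorem~3.4]{SS17}, take its conclusion as a black box, and then use Theorem~\ref{jst} exactly once to pass from $\Omega\cap J_c$ to $J_c$.
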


 Branched holomorphic motions and structural stability are related by $\mathbb{C}^2$ extensions,   as explained in \cite[Theorem 2.1]{SS17}. 
  Indeed, for every $c$ in the parameterisation domain $U$ of the branched motion $\mathbf{h}$ there corresponds a holomorphic map $f_c: V \to \mathbb{C}^2,$ where $V$ is an open subset of $\mathbb{C}^2.$ The closure of periodic points of $f_c$ is denoted by $J(f_c).$ The dynamics of $\mathbf{f}_c$ on $J_c$ is a factor of $$f_c: J(f_c) \to J(f_c).$$ The sets $J(f_c)$ are related by a (non-branched) holomorphic motion $h:U\times J(f_{c_0}) \to J(f_c)$ whose projection onto $\mathbb{C}$  is precisely $\mathbf{h}.$ 

Since every $h_c: J(f_{c_0}) \to J(f_c)$ is a topological conjugacy, this construction in dimension 2 reveals that $\mathbf{f}_c$ is  structurally stable on $J_c$ when viewed as a map of $\mathbb{C}^2,$  at every parameter  in $H_{\beta}$.

  \subsection{Hyperbolic correspondences.} In this paper we give a precise meaning of hyperbolicity for correspondences.  Recall that (i) \emph{a quadratic map $f$ is hyperbolic if the critical point zero is attracted to an attracting cycle.} This definition presents little information on the dynamics, but it is well known that if  $f$ is hyperbolic then (ii) {\emph{the  union of all the attracting cycles (including the one at infinity) }attracts the whole Fatou set.} The two  properties are equivalent, and  the second  can be extended to correspondences,  yielding some  results which are well known for the quadratic family. There is also a generalisation of hyperbolicity for  finitely generated holomorphic families of rational semigroups $$G_c=\langle f_{1,c}, \cdots , f_{n,c}\rangle$$ satisfying certain conditions. Sumi  has proved that if  $G_a$ is hyperbolic,  then the Julia set $J(G_c)$ moves holomorphically at $c=a$ and $G_c$ is hyperbolic, for every $c$ in a neighbourhood of $a$ (see Sumi \cite{Sumi98}).

In our context, we say that the correspondence $\mathbf{f}_c$ is hyperbolic if $$\hat{\mathbb{C}} - J_c=\left\{ z \in \hat{\mathbb{C}}: \omega(z, \mathbf{f}_c) \subset J_c^* \cup \{\infty\} \right\}, $$ where $\omega(z,\mathbf{f}_c)$ denotes the  $\omega$-limit set of a point  $z.$

{The following result is also stated as Corollary \ref{dgeqd}.}

    \begin{thmm}[Hyperbolicity] \label{pqh} There is an open set $H_{\beta}$ containing $\mathbb{C} - M_{\beta,0}$ and  every simple centre such that $\mathbf{f}_c$ is hyperbolic, for every $c$ in $H_{\beta}.$ Moreover, backward orbits of any point in $\mathbb{C}  - P_c$ accumulate on $J_c$ when $c\in H_{\beta}.$   \end{thmm}

 This result    could be used  to show that  $\mathbf{f}_c$ has no wandering domains when $c\in H_{\beta},$ as Figure \ref{fdt} indicates for the particular example $(w+1)^q=z^p.$  We do not  prove this result in this paper.

    The second assertion  of Theorem \ref{pqh} is the basis of the algorithm used to generate Figure \ref{fdt} and reveals that the notion of Julia set introduced in this paper (closure of repelling periodic orbits) displays an expected feature already known for limit sets of Kleinian groups.

 Recall that for a non-elementary Kleinian group $\Gamma$, there are several equivalent definitions of its limit set $\Lambda(\Gamma)$: it is the complement of its domain of discontinuity $\Omega(\Gamma),$ and also the closure of repelling fixed points; moreover, $\Lambda(\Gamma)$ is the   set of accumulation points of any full orbit under $\Gamma.$

The foundations for a systematic study of regular and limit sets of general holomorphic correspondences are given by Bullett \& Penrose in  \cite{Bullett2001}.
 The generalisations of the various equivalent definitions of  $\Lambda(\Gamma)$ are no longer equivalent in the larger category of holomorphic correspondences.
 It is possible to give a general  definition of \emph{regular domain}  \cite{Bullett1994}, and a notion of \emph{equicontinuity set} for a correspondence  \cite{Bullett2001}.  The Julia set of a correspondence as defined in this paper satisfies  at least two of the equivalent definitions of the Julia set of a rational function: it is the closure of repelling periodic cycles, and for  every $c$ in $H_{\beta}$,  backward orbits of any point in $\mathbb{C}  - P_c$ accumulate on $J_c.$

\section{First steps: recurrence near simple centres}

 We start with some preliminary results on the speed of convergence of iterates in the basin of infinity.

 Recall that a \label{podemg}\emph{multifunction} from $A$ to $B$ is any relation $\mathbf{h} \subset  A\times B$ such that, for every $z$ in $A$, there is $w$ in $B$ such that $(z,w)$ belongs to $\mathbf{h}.$ The notations $\mathbf{h}: A \to B,$   $$ \mathbf{h}(z)=\{w\in B: (z,w) \in \mathbf{h}\}  \ \ \textrm{and}  \ \ \mathbf{h}(A) = \bigcup_{z\in A} \mathbf{h}(z)$$ are commonly used.  \label{pdmge}
 The composition  $\mathbf{h}_1\circ \mathbf{h}_2$  of multifunctions is defined by the relation consisting of all pairs $(z, w)$ for which there is $x$ such that $(z,x)\in \mathbf{h}_2$ and $(x,w)\in \mathbf{h}_1.$  If a multifunction is surjective, that is, $\mathbf{h}(A) = B,$ then its inverse  $\mathbf{h}^{-1}: B\to A$  always exist. In  this case, $\mathbf{h}^{-1}(w)$ is the set of all $z\in A$ such that $(z,w) \in \mathbf{h}.$    \label{phgen}  \label{asene}

 Each member of the one-parameter family of holomorphic correspondences $(w-c)^q=z^p$ (where the integers  $p>q \geq 1$ are fixed) is a multifunction  $\mathbf{f}_c: \hat{\mathbb{C}} \to \hat{\mathbb{C}}.$   Therefore, $\mathbf{f}_c(z)$ \label{pmvbe} is the set of all $w$ such that $(w-c)^q = z^p.$
 
\label{pwerf}A sequence $(z_i)$ is a \emph{forward orbit} (or simply, orbit) if  $z_{i+1} \in \mathbf{f}_c(z_i),$ for every $i.$ If $z_{i+1} \in \mathbf{f}_c^{-1}(z_i)$ for every $i,$ then $(z_i)$ is a \emph{backward orbit}. A set $B$ is called \emph{forward invariant} if $\mathbf{f}_c(B) \subset B.$ If $\mathbf{f}_c^{-1}(B) \subset B,$ then $B$ is \emph{backward invariant. }    If $k>0$, then $$\mathbf{f}_c^{k} = \underbrace{\mathbf{f}_c \circ \cdots \circ \mathbf{f}_c}_{k} \ \ \textrm{and} \ \  \mathbf{f}_c^{-k} = (\mathbf{f}_c^{-1})^{k}. $$
\label{oplf}
 \noindent  {Fix  any $\lambda>1.$} Given  $c\in \mathbb{C},$ the equation  $x^{\frac{p}{q}} -\lambda x -|c| =0$ has  two solutions in $x.$ Let $x_0$ be the greatest solution. Any $R> x_0$  is, by definition, \label{ghbne}\hypertarget{xp}{\emph{an escaping radius of}} $\mathbf{f}_c.$  The following result shows that the region determined by $|z| > R$ is forward invariant under $\mathbf{f}_c$, and that every orbit in this region converges exponentially fast to $\infty.$ 

\begin{lem}[Escaping radius]
\label{rvb} An escaping radius \hypertarget{ls}{$R$} can be chosen locally constant at every $c\in \mathbb{C}.$ If $|z| >R$, then any forward orbit $(z_i)$ of $z$ under $\mathbf{f}_c$ satisfies 
 $$\cdots |z_n| > \lambda |z_{n-1}| > \lambda^2 |z_{n-2}| > \cdots >\lambda^n|z|. $$ Consequently, $(z_i)$ converges \hypertarget{qo}{exponentially} fast to $\infty,$ and\label{gdgihg} $$ \mathbf{f}_c^{-m-1}(B_{R}) \Subset \mathbf{f}_c^{-m}(B_{R}),$$
where  \label{ugnew}$B_R = \{z\in \mathbb{C}: |z| < R\}.$ \end{lem}

\begin{proof} Let $R$ be an escaping radius of $\mathbf{f}_c.$ If $w$ is an image of $z$ under $\mathbf{f}_c$ and $|z| > R,$ then 
$|w|  \geq |w -c| -|c| = |z|^{p/q} -|c| > \lambda |z|.  $
\end{proof}

As a corollary,  the \label{gueng}\emph{filled Julia set}
\begin{equation}\label{urt}
  K_c=\bigcap_{k=0}^{\infty} \mathbf{f}_c^{-k}(B_R)= \bigcap_{k=0}^{\infty} \overline{\mathbf{f}_c^{-k}(B_{R})}\end{equation}

\noindent is nonempty. This definition is independent of $R.$ 
 Due to the following lemma, a point belongs to $K_c$ iff it has at least one bounded forward orbit.  

\begin{lem}[Choice] \label{pfv}  Suppose $z\in A$ and, for every $n>0,$ there is a forward orbit $z\mapsto z_1 \mapsto \cdots \mapsto z_n$ contained in $A.$ Then there is an infinite forward orbit of $z$ in $A.$ (Similarly for backward orbits). 

\end{lem}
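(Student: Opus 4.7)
The plan is to recognise this as an application of König's infinity lemma to a tree of finite forward orbits. The hypothesis guarantees that this tree is infinite, and the geometric structure of $\mathbf{f}_c$ guarantees that it is finitely branching, which is exactly the combinatorial setup needed.

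More precisely, I would form a rooted tree $T$ whose vertices of level $n$ are the finite forward orbits $z=z_0\to z_1 \to \cdots \to z_n$ contained in $A$, with a vertex at level $n$ joined to a vertex at level $n+1$ precisely when the latter extends the former. The root is the trivial orbit $(z)$. Each vertex of $T$ has at most $q$ children, because by the defining equation $(w-c)^q = z^p$ the image set $\mathbf{f}_c(z_n)$ consists of at most $q$ points, so there are at most $q$ admissible extensions $z_n \to z_{n+1}$ (and among these, only those landing in $A$ are retained). By the hypothesis of the lemma, $T$ contains at least one vertex at every level, hence $T$ is infinite.

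Now I invoke König's lemma: any infinite rooted tree with finite branching possesses an infinite path starting at the root. This path is, by construction, a sequence $z = z_0 \to z_1 \to z_2 \to \cdots$ with $z_i \in A$ and $z_{i+1} \in \mathbf{f}_c(z_i)$ for every $i$, which is the desired infinite forward orbit.

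For the backward version, the same argument applies to the multivalued map $\mathbf{f}_c^{-1}$. Here the branching is also finite: $\mathbf{f}_c^{-1}(w)$ is the solution set of $z^p = (w-c)^q$, which has at most $p$ elements. The main (minor) obstacle is simply the bookkeeping to ensure that finite branching really does hold at every vertex; once that is in place, König's lemma delivers the conclusion without further analysis.
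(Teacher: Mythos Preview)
Your proposal is correct and is essentially the paper's own argument made explicit: the paper's one-line proof says to ``use the fact that every point has at most $q$ images to find repeated terms and extract an infinite orbit out of infinitely many finite orbits,'' which is precisely the pigeonhole construction underlying K\"onig's lemma on the finitely-branching tree you describe. Naming K\"onig's lemma and spelling out the tree structure is a welcome clarification, but the mathematical content is identical.
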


\begin{proof} Use the fact that every point has at most $q$ images to find repeated terms and extract an infinite orbit out of infinitely many finite orbits. 
\end{proof}
  \subsection{Connectedness locus.} It follows from \eqref{urt} that $\mathbf{f}_c^{-1}(K_c)=K_c.$ Hence every point in $K_c$ has one infinite forward orbit in $K_c$ and $\mathbf{g}_c: K_c \to K_c$ is the well defined multifunction  given by \label{pohnf}$$\mathbf{g}_c(z) = \mathbf{f}_c(z)\cap K_c,$$  for every $z\in K_c.$ \label{gueng}
 We let \label{qwegn}$M_{\beta}$ be the set of all $c$ such that $K_c$  is connected, and \label{pmngi} $$M_{\beta,0}=\{c\in \mathbb{C}: 0\in K_c\}. $$ We intend to show that  $M_{\beta}$ contains $M_{\beta, 0}.$ For the quadratic family, both sets are equal to the Mandelbrot set and $M_{2} - M_{2,0}$ is empty. (If $\beta$ is sufficiently close to $1$, then the complement of $M_{\beta,0}$ in $M_{\beta}$ represents a large class of filled Julia sets named Carpets (see \cite{BLS} for more details).

 \begin{thm} \label{trv} For any rational $\beta >1,$ the set $M_{\beta, 0}$ is contained in the connectedness locus  $M_{\beta}.$

  \end{thm}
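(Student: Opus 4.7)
The plan is to prove by induction on $m\geq 0$ that each set $\mathcal{R}_{m,c}=\mathbf{f}_c^{-m}(B_R)$ is connected. Once this is established, Lemma~\ref{rvb} provides the nested inclusions $\overline{\mathcal{R}_{m+1,c}}\subset\mathcal{R}_{m,c}\subset\overline{B_R}$, so $(\overline{\mathcal{R}_{m,c}})_{m\geq 0}$ is a decreasing sequence of compact connected subsets of $\overline{B_R}$; the intersection $K_c$ is then automatically connected.

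To set up the induction, first observe that the hypothesis $c\in M_{\beta,0}$ means $0\in K_c$; since $\mathbf{f}_c(0)=\{c\}$, the point $c$ inherits a bounded forward orbit from that of $0$, so $c\in K_c\subset\mathcal{R}_{m,c}$ for every $m$. The base case $\mathcal{R}_{0,c}=B_R$ is a disk. For the inductive step, I would use the factorisation of the backward dynamics coming directly from the defining equation $z^p=(w-c)^q$:
\[\mathbf{f}_c^{-1}(A)=\{z\in\mathbb{C}:z^p\in\psi(A)\},\qquad\psi(w):=(w-c)^q.\]
If $A$ is connected and contains $c$, then $\psi(A)$ is a connected subset of $\mathbb{C}$ containing $\psi(c)=0$.

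The heart of the argument is then the claim that $\phi^{-1}(B):=\{z\in\mathbb{C}:z^p\in B\}$ is connected whenever $B$ is connected and contains $0$. I would establish this by a path-lifting argument: given any $z_0$ with $z_0^p\in B$, pick a continuous path in $B$ from $z_0^p$ to $0$; since the only critical value of $\phi(z)=z^p$ is $0$ itself and the terminus of the chosen path is precisely $0$, this path lifts continuously from $z_0$ to the point $0\in\phi^{-1}(B)$. Hence all $p$ sheets of $\phi^{-1}$ meet at the origin, and $\phi^{-1}(B)$ is path-connected. Applied with $A=\mathcal{R}_{m,c}$, this yields $\mathcal{R}_{m+1,c}$ connected, which still contains $c$ by the remark above, closing the induction. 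The main technical point to verify is continuity of the lift as it approaches the critical point $0$ (which reduces to the elementary fact that $|\tilde\gamma(t)|=|\gamma(t)|^{1/p}\to 0$); the remainder of the argument is routine.
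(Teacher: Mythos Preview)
Your proof is correct and follows essentially the same route as the paper: show inductively that each $\mathcal{R}_{m,c}$ is connected (using that $c\in\mathcal{R}_{m,c}$ whenever $0\in K_c$) and then pass to the nested intersection. The only cosmetic difference is that the paper packages the inductive step into Lemma~\ref{xwp}, whereas you unpack it via the factorisation $\mathbf{f}_c^{-1}=\phi^{-1}\circ\psi$ and a path-lifting argument for $z\mapsto z^p$.
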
 
 
 We shall need the following lemma.   (Recall that a \emph{region} is a nonempty open connected set).
\begin{lem} \label{xwp} If $V=\mathbf{f}_c(U)$ and $0\in U,$ then $U$ is a region iff  \, $V$ is a region. \end{lem}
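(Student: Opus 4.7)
The plan is to exploit the local description of $\mathbf{f}_c$ near and away from the critical point $0$. Writing $\mathbf{f}_c$ in polar coordinates as $(r,\theta) \mapsto (r^{\beta}, \beta\theta) + c$, one sees immediately that $\mathbf{f}_c(B_{\epsilon}(0)) = B_{\epsilon^{\beta}}(c)$ and that $0$ is the unique preimage of $c$. Away from $0$, the correspondence splits into $q$ disjoint univalent branches, each of which is a local biholomorphism (hence an open map).

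First I would prove the implication $U$ region $\Rightarrow V$ region. For openness of $V$, take $w \in V$: if $w \neq c$, any preimage $z_0 \in U$ is nonzero, and the univalent branch of $\mathbf{f}_c$ near $z_0$ sending $z_0$ to $w$ maps a neighborhood of $z_0$ biholomorphically onto a neighborhood of $w$; if $w = c$, then since $0 \in U$ is an interior point, some disk $B_{\epsilon}(0) \subset U$ yields $B_{\epsilon^{\beta}}(c) \subset V$. For connectedness, given $w_0 \in V$ with a preimage $z_0 \in U$, take a path $\gamma : [0,1] \to U$ from $z_0$ to $0$ which meets $0$ only at $t=1$ (this can be arranged because $U$ is a planar region). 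On $[0,1)$ the path lifts continuously to the analytic graph $\Gamma = \{(z,w) : (w-c)^{q} = z^{p}\}$ starting at $(z_0, w_0)$, using the local inverses of the projection $\pi_1 : \Gamma \to \mathbb{C}$, which is a local biholomorphism off the fiber $\pi_1^{-1}(0) = \{(0,c)\}$. As $t \to 1$, the identity $|w(t) - c| = |\gamma(t)|^{\beta}$ forces $w(t) \to c$, so the lift extends continuously to $(0, c)$. Projecting onto the $w$-coordinate gives a path in $V$ joining $w_0$ to $c$, establishing path-connectedness of $V$.

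The reverse implication, interpreted in the paper's natural usage where $U$ is the relevant open component of $\mathbf{f}_c^{-1}(V)$ containing $0$, is dual: openness of $U$ at $0$ is obtained by pulling back a disk around $c$ via the polar formula, and openness elsewhere by pulling back through the univalent inverse branches. Connectedness follows from the irreducibility of the curve $(w-c)^{q} = z^{p}$, which is guaranteed by $\gcd(p,q) = 1$; concretely, near $(0,c)$ the curve admits the single global parameterisation $(z,w) = (s^{q}, c + s^{p})$ with $s$ ranging over a disk, so $\Gamma$ is locally a topological disk at $(0,c)$. The preimage $\pi_2^{-1}(V) \cap \Gamma$ of a connected set $V \ni c$ is therefore connected, and projecting back under $\pi_1$ gives a connected $U$.

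The principal technical point will be the path-lifting step through the branch point $(0,c) \in \Gamma$: one must verify that the continuous lift on $[0,1)$ extends continuously to $t=1$, and this relies on the single-point fiber $\pi_1^{-1}(0) \cap \Gamma = \{(0,c)\}$ together with the disk-structure of $\Gamma$ near $(0,c)$, which in turn uses $\gcd(p,q)=1$. Once this is in hand, the rest reduces to the open mapping theorem for the univalent branches and the explicit polar description of $\mathbf{f}_c$ at $0$.
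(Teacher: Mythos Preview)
Your argument is correct, but it is considerably more elaborate than the paper's. The paper disposes of the lemma in a single sentence: ``The union of connected sets with a point in common is also connected.'' The intended decomposition is presumably this: for each $w\in V$ choose a path $\gamma_w$ in $V$ from $c$ to $w$; then $\mathbf{f}_c^{-1}(V)=\bigcup_w \mathbf{f}_c^{-1}(\gamma_w)$, each piece is a ``spider'' of $p$ arcs meeting at the single preimage $0=\mathbf{f}_c^{-1}(c)$, and the union is connected. The forward direction is symmetric with $c$ in place of $0$. Your path-lifting on the curve $\Gamma$ is exactly one way to justify that each such piece is connected, so the two approaches agree in substance; you simply unpack what the paper leaves implicit, and you route the argument through $\Gamma$ and its projections rather than through the elementary factorisation $\mathbf{f}_c = (\zeta\mapsto c+\zeta)\circ(\zeta\mapsto \zeta^{1/q})\circ(z\mapsto z^p)$, which reduces everything to the single fact that $z\mapsto z^n$ pulls back a connected open set containing $0$ to a connected open set.

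Two small points. First, your phrasing of the reverse direction---taking $U$ to be ``the relevant open component of $\mathbf{f}_c^{-1}(V)$ containing $0$''---trivialises connectedness; the content (and the paper's actual application to $\mathcal{R}_{m,c}=\mathbf{f}_c^{-m}(B_R)$) is that the \emph{full} preimage $\mathbf{f}_c^{-1}(V)$ is connected. Your subsequent argument via $\pi_2^{-1}(V)\cap\Gamma$ in fact proves exactly this, so the slip is only in the set-up. Second, you invoke $\gcd(p,q)=1$ for the irreducibility of $\Gamma$, but the lemma does not need it: even when the curve is reducible, all components pass through $(0,c)$, and the same ``union of connected sets through a common point'' argument applies. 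The paper's one-liner is indifferent to this hypothesis.
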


 \begin{proof} Connectedness in the main concern.  Notice that $U$ is a union of connected sets $L$ containing $0$ such that $\mathbf{f}_c(L)$ is connected (e.g., if $L$ is a line segment, then $\mathbf{f}_c(L)$ consists of $q$ line segments.)  The sets  $\mathbf{f}_c(L)$ have a point in common, namely, $c.$ The union of connected sets with a point in common is also connected.  Hence $V$ is connected if $U$ is connected. \end{proof}

 \begin{Proof}{of Theorem \ref{trv}}  If $0\in K_c,$ then $c$ has a bounded forward orbit and, in particular, $c$ is in $ \mathbf{f}_c^{-m}(B_R),$ for every $m,$ where $R$ is an escaping radius.   It follows from Lemma  \ref{xwp} that every $\mathbf{f}_c^{-m}(B_{R})$ is connected. Hence  the nested intersection $K_c=\bigcap_{m}\overline{\mathbf{f}_c^{-m}(B_{R})}$ is connected. (The nested intersection of compact and connected subsets of a metric space is compact and connected).  This proves that $M_{\beta,0} \subset M_{\beta}.$
 \end{Proof}
 \subsection{Recurrence.}  A forward orbit $(z_i)$ which is also a periodic sequence is called a \emph{cycle}. \label{pgmen}\emph{Centres} are those parameters $c$ for which every forward orbit of $0$ under $\mathbf{f}_c$ either escapes to $\infty$ or comes back to $0$ forming a cycle. More precisely,  a parameter $c$ is a \emph{centre} if  $\mathbf{g}_c^n(0) =\{0\},$ for some $n>0.$ In particular, if there is only one bounded forward orbit of $0$ under $\mathbf{f}_c$ and this orbit is necessarily a cycle,   then $c$ is by definition a \hypertarget{st}{\emph{simple centre.}}  (See the Introduction for examples of simple centres).

A holomorphic map $\varphi$ is a \label{pjyfk}\emph{branch} of $\mathbf{f}_c$ if $(\varphi(z) -c)^q=z^p,$ for every $z$ in the domain of $\varphi.$

A cycle $(z_i)_0^n$ of period $n$ is said to be \emph{attracting} in two situations: (i) when zero belongs to the cycle (in this case, it is called \emph{super-attracting)}; or (ii) when  the \emph{multiplier} $$\lambda= \prod_{i=0}^{n-1} \varphi_i'(z_i)  $$ satisfies $|\lambda| <1,$ where each $\varphi_i$ is a branch of $\mathbf{f}_c$ taking $z_i$ to $z_{i+1}$ ($\varphi_i$ exists only if $z_i$ is nonzero).  If $|\lambda | >1,$ the cycle is said to be \emph{repelling}.

  Let  $B_d=\{|z| <d\}.$  Every region which is conformally isomorphic to $B_d$ is  a \label{qejtn}\hypertarget{cp}{\emph{conformal disk}}, or simply, a disk. Suppose $c=a$ is a simple centre. Then there is only one bounded forward orbit $(z_i)$ of zero under $\mathbf{f}_a$, and this orbit is necessarily a cycle of length $n.$ We say that  $n$ is a minimal period if no other element of the cycle is zero, except for the first one.

  \begin{thm} \label{pcvm}Suppose $a$ is a simple centre, with a cycle $(z_i)$ of minimal period $n.$ For every $d$ in some interval \hypertarget{interval}{$(0, \epsilon)$} and $c$ in a neighbourhood $V$ of $a$, there is only one sequence of \hypertarget{cm}{maps}  \begin{equation} \label{hdf}
B_d \xrightarrow{\mathbf{f}_c} D_{1,c} \xrightarrow{\varphi_{1,c}} D_{2,c} \to \cdots \xrightarrow{\varphi_{n-1,c}} D_{n,c}, \ \ \ c\in V,
 \end{equation}
 
\noindent satisfying: \hypertarget{s1}{$(a)$} every $D_{i,c}$ is a disk  and $\varphi_{i,c}: D_{i,c} \to D_{i+1,c}$ is a conformal isomorphism; \hypertarget{s2}{$(b)$} for every $c$ fixed, the disks $D_{i,c}$  are pairwise disjoint and $D_{n,c} \Subset  B_d;$  \hypertarget{s3}{$(c)$} every $\varphi_{i,c}$ is a branch of $\mathbf{f}_c,$ and\label{olq}
 \label{tghm} there are branches $\phi_i$ of $\mathbf{f}_0$ such that $\varphi_{i,c}= \phi_i +c$ on $D_{i,c},$ for every $c\in V$ (in this sense, we say that $\varphi_{i,c}$ is a \hypertarget{perturbation}{\emph{perturbation}}  of $\varphi_{i,0})$; and $(d)$ the orbit of zero under  successive compositions of the maps $\varphi_{i,a}$ is precisely the cycle $(z_i).$

\end{thm}

\begin{proof} A sequence of disks satisfying $D_{n,a}\Subset B_d$ exists because the cycle $(z_i)$ contains zero and is super-attracting. The maps $\varphi_{i,a}$ are uniquely determined. By continuity, a small perturbation of each $\varphi_{i,a}$ yields a family of  sequences \eqref{hdf} with $D_{n,c} \Subset B_d,$ for every $c$ in a neighbourhood of $a.$
\end{proof}

The \emph{dual Julia set} $J_c^*$ is the closure of all attracting cycles, except $\infty.$  For rational maps, this set is always finite, but for correspondences its geometry can be as puzzling as the Julia set \label{qetfx}$J_c,$ \label{qegyc} which is by definition, the closure of  all \emph{repelling cycles} of $\mathbf{f}_c.$

We shall prove that $J_c^*$ is a Cantor set associated with a conformal iterated function system when $c$ is sufficiently close to a simple centre.

The first step towards this assertion is  Theorem \ref{pcvm}. The second step is provided by the following result, Theorem \ref{ppp}. It can be regarded as  an improvement of the previous theorem, in the sense that $D_{n,c}$ is now proved to avoid the critical point.

Since $\mathbf{f}_c(z) = c+  \exp  \frac{1}{q}\log z^{p},$ for every disk $D$ avoiding $0$ there are $q$   univalent branches $\varphi_j: D \to \mathbb{C}$ determining $\mathbf{f}_c(D)=\bigcup_j \varphi_j(D).$ If this union is pairwise disjoint, $D$ is said to be a \label{pohbg}\emph{univalent disk}. The polar coordinates form of $\mathbf{f}_c$  is $(r, \theta) \mapsto (r^{p/q}, \frac{p}{q} \theta)+ c.$ Thus $\mathbf{f}_c$ bijectively transforms sectors of angle $2 \pi/ p$ at $0$ to sectors of angle $2\pi/q$ at $c.$ This fact can be used to prove the following lemma.  \begin{lem}\label{plmt} Any disk $D$ contained in an open sector of angle $2\pi/ p$ and vertex $0$ is a univalent disk. 
\end{lem}

The family of sequences of Theorem \ref{pcvm} depends on two parameters $d$ and $c.$  For this reason we shall denote it by \label{poihyt} $\mathcal{S}_{c,d}$ in the following theorem.

\begin{thm}[Recurrence] \label{ppp}   For every $c$ in a punctured neighbourhood $V-\{a\}$ of a simple centre $a,$ there is $d(c)$ such that the sequence $\mathcal{S}_{c,d}$ satisfies:
\label{ghj}
(a) the radius $d(c)$ of the first disk $B_d$ \hypertarget{lb}{approaches} zero as $c\to a;$ (b)
 $D_{n,c} \Subset B_d-\{0\}$ is a univalent disk;  (c) some iterate $\mathbf{f}_c^{-k}(B_{R})$ is a connected set containing the union of all disks of the sequence  $\mathcal{S}_{c,d}$;  and (d)    every $D_{i,c}$ is mapped into $q$ different disks under $\mathbf{f}_c$, and only one of them, $D_{i+1,c}$, is contained in $\mathbf{f}_c^{-k}(B_{R}).$

\end{thm}

We shall include here an analysis of Figure \ref{fdt} of the Introduction to illustrate this theorem. In that particular example, $n=2$ and $B_d$ is the disk labeled as $A$ in Figure \ref{fdt}. By Theorem \ref{ppp} $(a)$,  the size of the disk $A$ is very small when $c$ is close to the simple centre $-1.$ As a consequence, the Cantor set $J_c^*$ described in Figure \ref{fdt} is very close to the cycle $ 0 \mapsto -1 \mapsto 0$ when $c$ is sufficiently close to  $-1.$

The statement  $(b)$ of Theorem \ref{ppp} simply means that the disk $C=D_{2,c}$ avoids $0$ and its image splits into $2$ small disks contained in $B= D_{1,c}.$ The region $\mathbf{f}_{c}^{-k}(B_R)$ contains the filled Julia set, and can be visualised in Figure \ref{fdt} as a good approximation of $K_c$ for high values of $k.$ 

\begin{Proof}{of Theorem \ref{ppp}}  For $c\in V,$ let $g_c: D_{1,c} \to D_{n,c}$ be given by $g_c(z)=\varphi_{n-1,c} \circ \cdots \circ \varphi_{1,c}(z).$ 
If  $(z_{i,c})_{i=0}^{n}$ denotes  the orbit of $z_{0,c}=0$  under the sequence of maps in \eqref{hdf}, then $z_{c,n} =g_c(c).$  Let $f: V\to \mathbb{C}$ be given by $f(c)=g_c(c).$ Since $\varphi_{i,c}(z) = \phi_i(z) +c$ for a certain branch $\phi_i$  of $z\mapsto z^{\beta}$ with $\beta=p/q,$ it is possible  to show that $f(z)$ is indeed holomorphic and non-constant. Hence, $f: V \to \mathbb{C}$ is an open map.  (For example, if  $n=4$ then $f(z)$ is a branch of the multifunction
$((z^{\beta} +z)^{\beta} +z)^{\beta} +z,$ and therefore $f(z)$ cannot be constant).

There is a constant $C_0$ such that  $|g_c'(z)| \leq C_0,$ for every $c\in V$ and $z\in D_{1,c}.$ Since $D_{1,c}= \mathbf{f}_c(B_d)$, it follows that  $D_{1,c}$ is the open ball $B (c, d^{\beta}).$  Notice that $g_c$ maps the centre of $D_{1,c}$ to $f(c).$ Using the mean value inequality applied to $g_c: D_{1,c} \to D_{n,c},$ we conclude that  $D_{n,c}$ is contained in the open ball of radius $C_0d^\beta$ and centre $f(c).$ 

Main argument: in order to show $(a)$ and $(b),$ we shall prove that there is a neighbourhood $V_1$ of $a$ such that, for every $c$ in $V_1-\{a\}$, there is  $d$ satisfying (i)   $|f(c)| + C_0d^{\beta} < d$ and  (ii)  $C_0 d^{\beta} < |f(c)| \sin(\pi/p).$  Since $D_{n,c}$ is contained in the open ball of radius $C_0d^\beta$ and centre $f(c),$ the first inequality implies $D_{n,c} \Subset B_d.$ Using basic trigonometry we conclude that (ii) implies  
$D_{n,c} \Subset S_{2\pi/p},$ for some open sector $S_{2\pi/p}$ of angle $2\pi/p$ and vertex $0.$ According to Lemma \ref{plmt}, the disk $D_{n,c}$ is univalent and compactly contained in $B_{d} - \{0\}.$

Recall that $\beta=p/q$ and let $\gamma= \beta -1.$ There is  $d_0>0$ with $3C_0d_0^{\gamma} < \sin (\pi/p).$ Let $r=C_0 d_0^{\beta}/ \sin (\pi/p).$ Since $a$ is an isolated zero of $f$, the set $f^{-1}(B_r)$ contains an open set $V_1 \subset V$ around $a$ such that   $0<|f(c)| < r,$ for every $c\in V_1-\{a\}.$   If we choose  $d$ with $y:=C_0d^{\beta}/\sin (\pi/p)$ sufficiently close to $|f(c)|,$ then   $y < |f(c)| < 2y.$ 
Since $y<|f(c)| < r,$ it follows that $d< d_0.$ By the assumption,  $3C_0d_0^{\gamma} < \sin (\pi/p).$ Since $d<d_0,$ we have   $3C_0 d^{\beta} < d\sin (\pi/p).$ Therefore, $$\frac{C_0 d^{\beta}}{\sin (\pi/p)} < |f(c)|<\frac{2C_0 d^{\beta}}{\sin (\pi/p)} < d - \frac{C_0 d^{\beta}}{\sin (\pi/p)} < d - C_0 d^{\beta}, $$  which completes the main argument.

 We now show that we may reduce $V_1$ even more so that, for any $c$ in $V_1-\{a\}$ and $d=d(c),$  the set $\mathbf{f}_c^{-k}(B_{R})$ satisfies assertions $(c)$ and $(d).$ 
 
Since  the cycle $(z_{i,a})_{i=0}^{n}$ is the only orbit of zero under $\mathbf{g}_a,$    $z_{i+1,a}$ is the only image of $z_{i,a}$ under $\mathbf{f}_a$ which is contained in $K_a.$

Hence $\mathbf{f}_a(z_{i,a}) - \{z_{i+1,a}\}$ is contained in the complement of $K_a.$  By \eqref{urt}, there is $m>0$ such that $\mathbf{f}_a^m$ maps $\mathbf{f}_a(z_{i,a}) - \{z_{i+1,a}\}$ into the complement of $B_R,$ for every $0<i<n.$ By continuity, if $c$ is sufficiently close to $a$, then $\mathbf{f}_c^{m}$ maps $$\mathbf{f}_c(D_{i,c}) - D_{i+1,c} $$ into the complement of $B_R$ (notice that the diameter of $D_{i,c}$ tends to zero as $c\to a$). Therefore, $\mathbf{f}_c$ maps $D_{i,c}$ into $q$ different disks, and only one of them, $D_{i+1,c},$ is contained in $\mathbf{f}_c^{-m}(B_R).$
The set $ \mathbf{f}_c^{-m}(B_R)$ is connected (by Lemma \ref{xwp}) and satisfies assertions $(c)$ and $(d).$    \end{Proof}

 Property $(b)$ of Theorem  \ref{ppp} states that $D_{n,c}$ avoids the critical point when $d=d(c).$ The following result  reveals that $D_{n,c}$ contains the critical point in many situations where $d\neq d(c).$

 \begin{thm}  \label{fed}
Let $a$ be a simple centre. For every $d$ in a small interval $(0, \epsilon),$ there is a neighbourhood $V_d$ of $a$ such that, if  $c\in  V_d,$ then the sequence $\mathcal{S}_{c,d}$  of Theorem \ref{pcvm} satisfies
$$0 \in D_{n,c} \Subset B_d,$$ and the {third and fourth properties of Theorem \ref{ppp} still hold in this case. }

 \end{thm} 
 
 \begin{proof} Fix $c=a$ and find an interval $(0,\epsilon)$ such that $0\in D_{n,a} \Subset B_d,$ for every $d$ in $(0, \epsilon).$ Since $D_{n,c}$ is open and varies continuously with $c,$ for every $d$ in $(0, \epsilon)$ there is a neighbourhood $V_d$ such that $0\in D_{n,c} \Subset B_d,$ whenever $c\in V_d.$  \end{proof}

\begin{defi}[Critical  and non-critical systems] \normalfont Any sequence of maps satisfying the conditions of Theorem \ref{ppp} is a \label{sdgrw}\emph{non-critical system} and is denoted by $\mathcal{F}_{c}.$ 
Any sequence satisfying the conditions of Theorem \ref{fed} is a \label{poyhn}\emph{critical system} and is denoted by $\mathcal{A}_c.$

As described in Theorem \ref{pcvm}, in both cases the sequence is given by disks $D_{i,c}.$ We define \label{pitgk}
$$\operatorname{dom}(\mathcal{F}_c)=\bigcup_{i=1}^n D_{i,c}   \ \ \textrm{and} \ \ \operatorname{dom}{(\mathcal{A}_c)} = \bigcup_{i=0}^{n-1} D_{i,c}. $$

\end{defi}

\noindent The main difference between $\operatorname{dom}(\mathcal{F}_c)$ and $\operatorname{dom}(\mathcal{A}_c)$ is that the critical point belongs only to the second.  However, both sets are contained in  $K_c,$ and iterations \emph{inside} them will always {converge to the Cantor set $J_c^*$,} as we shall see in Theorems \ref{lqe}, \ref{ste} and equation \eqref{grw}. 

Iterating inside $\operatorname{dom}(\mathcal{F}_c)$ naturally  produces a conformal iterated function system (CIFS).

\section{Conformal iterated function systems}  \label{spl}

In view of Theorem \ref{ppp}, for every $c\neq a$ sufficiently close to a simple centre $a$ there corresponds a non-critical system $\mathcal{F}_c$ given by \eqref{hdf}.  Since $D_{n,c}$ is  simply connected and $0\not \in D_{n,c},$ there are $q$ holomorphic maps $\psi_{k}: D_{n,c} \to \mathbb{C}$ such that $\mathbf{f}_c(D_{n,c}) = \bigcup_{k} \psi_k(D_{n,c}).$   Let
\begin{equation} \label{qxt}
f_k(z) = \varphi_{n-1,c} \circ \cdots \circ \varphi_{1,c} \circ \psi_{k}(z),
\end{equation}
where the sequence of maps $\varphi_{i,c}$ is determined by $\mathcal{F}_c.$

Since $f_{k}(D_{n,c})\Subset D_{n,c},$ for every $0<k\leq q,$ by the Schwarz-Pick Lemma each $f_{k}$ is a contraction of the hyperbolic metric, and $\{f_k\}_1^q$   defines a CIFS  without overlaps on $D_{n,c}.$  Since $D_{n,c}$ is univalent (see Theorem \ref{ppp} $(b)$), the limit set  \begin{equation} \label{gjcnd}\Lambda_0 = \bigcap_{j>0} H^j(D_{n,c})\end{equation} is a Cantor set,  where $H$ is the Hutchinson operator given by $H(A) = \bigcup_{k=1}^{q}f_k(A),$ for  $A\subset D_{n,c}.$

  By Theorem \ref{ppp} $(c)$-$(d)$, the region $\mathbf{f}_c^{-k}(B_{R})$ contains  $ \operatorname{dom}(\mathcal{F}_c),$ and  $$\mathbf{g}_c(\operatorname{dom}(\mathcal{F}_c)) \subset  \operatorname{dom}(\mathcal{F}_c).$$ Therefore, it makes sense to define \begin{equation} \label{ess}
\Lambda(\mathcal{F}_c)= \bigcup_{j=0}^{n-1} \mathbf{g}_c^{j}(\Lambda_0)
\end{equation}
as the \emph{limit set} of $\mathcal{F}_c.$ In  \eqref{grw} and \eqref{rcv} we present a dynamical interpretation of $\Lambda(\mathcal{F}_c).$

By definition,\label{fhvm} a point $z_*$ belongs to \label{pogew}$\omega(z,\mathbf{g}_c)$ iff there is a forward  orbit under $\mathbf{g}_c$ starting at $z$ which has a subsequence converging to $z_*.$
If we allow only orbits under compositions of maps of $\mathcal{F}_c,$ then we obtain $\omega_{\mathcal{F}_c}(z).$ Notice that every orbit of a point outside $\mathbf{f}_c^{-k}(B_R)$ is attracted to infinity.  Combining this fact with Theorem \ref{ppp} (d) we conclude that, if $z\in \operatorname{dom}(\mathcal{F}_c)$ or $z\in B_d$,  then any forward orbit of $z$ under  $\mathbf{g}_c$ is also an orbit under the maps of $\mathcal{F}_c,$ and \begin{equation} \label{grw}\omega(z,\mathbf{g}_c)= \omega_{\mathcal{F}_c}(z)=\Lambda(\mathcal{F}_c). \end{equation}
The system $\mathcal{F}_c$ induces a CIFS not only on $D_{n,c},$ but also on every $D_{i,c}.$ If $\Lambda_{i}$ denotes the limit set of the CIFS induced on $D_{i,c},$ then \begin{equation} \label{rcv}\Lambda(\mathcal{F}_c)= \bigcup_{i=0}^{n-1} \Lambda_i.\end{equation}

\subsection{Hyperbolic sets.} We say that $\Lambda$ is a \label{poikht}
\emph{hyperbolic attractor} of $\mathbf{g}_c$ if  $\mathbf{g}_c(\Lambda) = \Lambda$ and $\mathbf{g}_c$ attracts a conformal metric  $\| \cdot\|$ defined on a neighbourhood of $\Lambda;$ that is, $\sup \|\varphi'(z)\| <1,$ where $\sup$ is taken over all $z\in \Lambda$ and every branch $\varphi$ of $\mathbf{g}_c$ such that $\varphi(z) \in \Lambda.$   (If we replace $\mathbf{g}_c$ by $\mathbf{f}_c$ in this definition, we obtain a \emph{hyperbolic attractor} of $\mathbf{f}_c.$)

\label{plkv}  Recall that $J_c^*$ was defined as the closure of all attracting cycles. Under the presence of a non-critical system $\mathcal{F}_c$, we shall prove that $J_c^*$ is a hyperbolic attractor obtained from the limit set $\Lambda(\mathcal{F}_c).$ In the following theorem, we say that a cycle $(z_i)_0^n$ attracts a forward orbit $(w_i)_0^{\infty}$ of the critical point $w_0=0$ if $\{z_i\}_ 0^{n}$ is the set of limits of convergent subsequences of $(w_i)_0^{\infty}.$

\begin{thm} \label{lqe} Any attracting cycle of $\mathbf{f}_c$ attracts a forward $\mathbf{f}_c$-orbit of the critical point. If $\mathcal{F}_c$ is a non-critical system,
then the Cantor set $\Lambda(\mathcal{F}_c)$ is  a hyperbolic attractor of $\mathbf{g}_c$ and 
\begin{equation}\label{net} J_c^*=\Lambda(\mathcal{F}_c). 
\end{equation}
\end{thm}

We shall use the following lemma. 

 \begin{lem} \label{solf}If $U$ is simply connected and $c \not \in U,$ then for any $w_0\in U$ and $z_0$ in $ \mathbf{f}_c^{-1}(w_0)$ there is a unique univalent branch $\phi: U \to \mathbb{C}$ of $\mathbf{f}_c^{-1}$ such that $\phi(w_0)=z_0.$
 
 \end{lem}

 \begin{proof}
 A consequence of analytic continuation applied to the algebraic curve determined by $\mathbf{f}_c^{-1}.$ 
 \end{proof}

\begin{Proof}{of Theorem \ref{lqe}} Let $\Lambda_0$ be the limit set defined by \eqref{gjcnd}. Then $\Lambda_0 = h(\Sigma_q),$ where $\Sigma_q$ denotes  $\{1, \ldots, q\}^{\mathbb{N}_0}$ and $$h(\underline{k})= \bigcap_{j} f_{k_{0}} \circ f_{k_{1}} \circ \cdots \circ f_{k_j} (D_{n,c}), $$
for any sequence $\underline{k}=(k_j)$ in $\Sigma_{q}.$  Give $\Sigma_q$ the product topology.

 Periodic points of the left shift $\sigma$ on $\Sigma_q$ are dense in $\Sigma_q,$ and  $h$ maps a periodic  $\underline{k}$ into a periodic $z\in \Lambda_0,$ since $h(\sigma \underline{k}) = \bigcup_j f_{k_1}\circ \cdots \circ f_{k_j}(D_{n,c}).$

\noindent  If $\sigma^j(\underline{k})= \underline{k},$  then $g(z)=z,$ where $g=f_{k_0} \circ \cdots \circ f_{k_{j-1}}.$ Therefore, $z$ is an attracting periodic point of $\mathbf{f}_c.$ Since $h: \Sigma_q \to \Lambda_0$ is continuous and surjective,  $\Lambda_0$ is contained in $J_c^*. $ Using the same reasoning, the limit set $\Lambda_i$ in \eqref{rcv} is also contained in $J_c^*.$ Hence, $\Lambda(\mathcal{F}_c) \subset J_c^*. $

Now we are going to show that {attracting cycles attract a forward orbit of the critical point.} Let $z_k \to \cdots  \to z_1 \to z_0$ be an attracting cycle of $\mathbf{f}_c$ of period $k$, that is, $z_k=z_0.$   Suppose this cycle does not attract any orbit of the critical point, and that no element of this cycle is $0.$  
 
Any point $z$ in a sufficiently small neighbourhood  $B_0$ of $z_0$ is attracted to the attracting cycle.  By a repeated application of Lemma \ref{solf}, we construct a sequence of conformal isomorphisms
 $$\cdots \xrightarrow{\phi_{j+1}} B_j \xrightarrow{\phi_j} B_{j-1} \xrightarrow{\phi_{j-1}} \cdots B_1 \xrightarrow{\phi_1} B_0  $$
where each $B_j$ is a topological disk  containing $z_j$ and the maps $\phi_j$ satisfy the following extension property:  $B_{(k+1)n+j} \supset B_{kn +j},$ and $\phi_{(k+1)n +j}$ restricted to $B_{kn+j}$ is $\phi_{kn+j}.$  This construction is possible because at each step $j,$ we have $c\not \in B_j$ (otherwise the cycle would attract an orbit of the critical point). Hence the family of disks $B_j$ avoid $\{0, c, \infty\}$ (we may suppose $c\neq 0,$ for if $c=0,$  then every point outside the unit circle is either attracted to $0$ or $\infty.$ In this case there is nothing to prove, since  $\mathbf{f}_0$ has  only one attracting cycle, namely, the fixed point $z=0$).  

It follows that $B_{*}= \bigcup_{j=0}^{\infty} B_{jn}$ is a nested union of disks; hence $B_*$ is also a simply connected set avoiding a three point set $\{0,c, \infty\}.$ Therefore, $B_{*}$ is isomorphic to the unit disk. There is a conformal isomorphism $\phi: B_* \to B_*$  such that $$\phi|_{B_{jk}} = \phi_{(j-1)k+1}\circ \cdots \circ \phi_{jk},$$ for every $j.$ In particular, $\phi$ has an attracting fixed point at $z_0.$

By the Schwarz Lemma, the map $\phi: B_* \to B_*$ is conformally conjugate to a rotation of the unit disk, and the existence of an attracting fixed point yields a contradiction. Hence the cycle must attract at least one orbit of the critical point. In other words, every attracting cycle is contained in $\omega(0, \mathbf{g}_c).$  Since $\omega(0, \mathbf{g}_c)$ is closed, $J_c^* \subset \omega(0,\mathbf{g}_c).$ A final application of \eqref{grw} yields $$J_c^*\subset \omega(0, \mathbf{g}_c) = \Lambda(\mathcal{F}_c) \subset J_c^*,$$ thereby proving \eqref{net}. 

It is clear from $\mathbf{g}_c(\operatorname{dom}(\mathcal{F}_c)) \subset \operatorname{dom}(\mathcal{F}_c)$ that $\mathbf{g}_c$ attracts a conformal metric defined on a neighbourhood of $J_c^* = \Lambda(\mathcal{F}_c).$ Therefore, $J_c^*$ is a hyperbolic attractor of $\mathbf{g}_c.$  
\end{Proof}

The  combination of Theorems \ref{lqe}  and  \ref{ppp} establishes  the geometric rigidity of dual Julia sets at simple centres by proving that simple centres are isolated in the parameter space:

\begin{cor}[Geometric rigidity] \label{thbn}The dual Julia set $J_c^*$ is finite at every simple centre $c=a,$ and a Cantor set for any $c\neq a$ sufficiently close to $a.$ 
\end{cor}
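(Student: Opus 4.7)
The plan is to split the corollary into two disjoint cases, each of which follows almost immediately from material already established, but for very different reasons.

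First I would handle the generic case $c \neq a$ close to $a$. By Theorem \ref{ppp}, for every such $c$ there exists $d=d_c$ (with $d_c \to 0$) such that $\mathcal{F}_{c,d}$ is a system of branches. Feeding this system into Theorem \ref{lqe} gives equation \eqref{net}, namely $J_c^*=\Lambda(\mathcal{F}_c)$, and the limit set $\Lambda(\mathcal{F}_c)$ was identified in Section \ref{spl} as a Cantor set (it is the finite union \eqref{rcv} of Cantor limit sets of no-overlap CIFS, whose homeomorphic image $h(\Sigma_q)$ onto $\Lambda_0$ is a Cantor set). This gives the Cantor-set conclusion.

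Second I would handle $c=a$ itself. Here I would argue directly from the definition of a simple centre: the only infinite forward orbit of the critical point under $\mathbf{g}_a$ is the finite cycle $0 \mapsto z_1 \mapsto \cdots \mapsto z_{n-1} \mapsto 0$, so $\omega(0,\mathbf{g}_a)$ is contained in the finite set $\{0,z_1,\ldots,z_{n-1}\}$. By the first assertion of Theorem \ref{lqe} every attracting cycle attracts some forward orbit of $0$, hence lies inside $\omega(0,\mathbf{g}_a)$. Therefore the union of all attracting cycles is finite, and closing it up gives a finite $J_a^*$.

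The only step that needs a little care is verifying that Theorem \ref{lqe}'s first assertion really does apply at the centre $c=a$ itself, since the construction of the invariant simply connected region $B_*$ in its proof excluded the value $c=0$ only. For $a \neq 0$ the argument goes through unchanged; for the trivial case $a=0$, which the introduction already flags as exceptional (``there is no system of branches for the centre $c=0$''), one can observe that $\mathbf{g}_0^n(0)=\{0\}$ reduces the whole problem to the single fixed cycle at the origin, so $J_0^*\subset\{0\}$ is finite by inspection. No step appears to present a serious obstacle; the real work was done in Theorems \ref{ppp} and \ref{lqe}, and this corollary is essentially their juxtaposition.
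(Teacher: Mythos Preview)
Your proposal is correct and follows essentially the same approach as the paper. The paper's own proof is a one-line remark that the corollary follows from Theorem~\ref{lqe} together with Theorem~\ref{ppp}; you have simply unpacked that remark, and in particular your treatment of the case $c=a$ (using the first assertion of Theorem~\ref{lqe} to force every attracting cycle into the finite set $\omega(0,\mathbf{g}_a)$) is the argument the paper leaves implicit. Your extra care with the degenerate parameter $a=0$ is a reasonable precaution, though the paper does not single it out.
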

There is an analogous of Theorem \ref{lqe}  for critical systems $\mathcal{A}_c$: 
\begin{thm} \label{rrr}   Let $\mathcal{A}_c$ be a critical system given by \eqref{hdf}, i.e., a sequence  $B_d \to \cdots  \to D_{n,c}.$  Then $\operatorname{dom}(\mathcal{A}_c)$  is forward invariant under $\mathbf{g}_c$ and $\mathbf{g}_c^n(B_d) \Subset B_d.$ If some iterate $\mathbf{g}_c^{kn}(B_d)$ is a hyperbolic Riemann surface  contained in a  disk $D \Subset B_{d} - \{0\},$  then the restriction of $\mathbf{g}_c^{n}$ to  $\mathbf{g}_c^{kn}(B_d)$ is given by finitely many conformal maps $g_j: \mathbf{g}_c^{kn}(B_d) \to  \mathbf{g}_c^{kn}(B_d)$ which are contractions with respect to the hyperbolic metric of  $\mathbf{g}_c^{kn}(B_d).$ Therefore, $\{g_j\}$ is a CIFS. If $\Lambda_0$ denotes its limit set, then 
 \begin{equation}
\omega(z,\mathbf{g}_c)= \bigcup_{i=0}^{n-1} \mathbf{g}_c^{i}(\Lambda_0)= J_c^*,
\end{equation}
for  every $z$ in $\operatorname{dom}(\mathcal{A}_c).$

\end{thm}

\begin{proof} A repetition of the arguments used to prove \eqref{qxt},    \eqref{ess}, \eqref{grw} and Theorem \ref{lqe}. \end{proof}

The following is a version of Theorem \ref{ppp} for critical systems. 
\begin{thm}\label{ste} Let $a$ be a simple centre.  For every $d$ in some interval $(0, \epsilon)$ there is a neighbourhood $V_d$ of $a$ such that, for any $c$ in $V_d,$ the sequence $\mathcal{S}_{c,d}$  of Theorem \ref{pcvm}  is a critical system $\mathcal{A}_c$ and \begin{equation}
\label{qpf}
\omega(z, \mathbf{g}_c) = J_c^*, 
\end{equation}
for every $z\in \operatorname{dom}(\mathcal{A}_c).$ 
\end{thm}

\begin{proof} The proof is divided into two steps.

 \noindent {\it Step 1: preliminaries.} By Theorem \ref{fed},  for every $d$ in some interval $(0, \epsilon_1),$ there is a neighbourhood $W_{d}$  of the simple centre $a$ such that the sequence $\mathcal{S}_{c,d}$  in \eqref{hdf} is a critical system, for every $c\in W_{d}.$ 

 For $c\in W_{d}$ and $z\in D_{1,c}$, let  $g_c(z) = \varphi_{n-1,c} \circ \cdots \circ \varphi_{1,c}(z),$ where $(\varphi_{i,c})_{i}$ is the sequence of maps of $\mathcal{S}_{c,d}.$ Therefore, the domain  of $g_c$ is  $D_{1,c}$ and $g_c$ maps  $D_{1,c}$ onto  $D_{n,c}.$  Recall that $D_{1,c} = \mathbf{f}_c(B_d).$
 The following properties hold for the family $g_c:$ 
 
 $(a)$ there exists a constant $C_0 >1 $ such that $$|g_c'(z)| \leq C_0, $$ for every $d\in (0, \epsilon_1),$  $c\in W_d$ and $z\in \mathbf{f}_c(B_d);$ 
 
  $(b)$  $f(c) =g_c(c)$ defines an open holomorphic map $f: W_d \to \mathbb{C}$ with an isolated zero at $c=a.$

 Notice that $f(c)$ is the final point of the orbit of zero under the sequence of maps in $\mathcal{S}_{c,d}.$  Now fix $d_1$ in $(0, \epsilon_1),$ $\lambda = 1/3,$ and $0 < \epsilon < d_1$ such that $C_0\epsilon^{\beta -1} < \lambda$ where $\beta = p/q >1.$ Let $$ V_d = W_d \cap W_{d_1},$$ for every $d \in (0, \epsilon_1).$

 \noindent {\it Step 2: the $V_d$ just defined satisfies the properties of the theorem.}   Indeed, let $d\in (0, \epsilon).$ Then $a\in V_d$ and $\omega(z, g_c) = J_c^*$ if $c=a.$ (As a matter of fact, every orbit under $\mathbf{g}_a$ inside $\operatorname{dom}(\mathcal{S}_{a,d})$ is attracted to the  only cycle $(z_i)_0^n$ of $\mathbf{g}_a$ containing zero. In this case, $J_a^*$ is precisely $\{z_1, \ldots, z_{n-1}\}.$)

We are going to show that $\omega(z, \mathbf{g}_c) = J_c^*$ if $c\in V_d$ and $c\neq a.$    
   Let $\delta = |f(c)|.$ Since $a$ is an isolated zero, $\delta >0.$ If $z\in B_{d},$ then $|z| < \epsilon$ and $|z|^{\beta-1} < \lambda.$ Since $\mathbf{g}_c^n = g_c \circ\mathbf{f}_c$ on $B_d,$ from $(a)$ of Step 1 we conclude that
 \begin{equation} \label{wvb}
 |\mathbf{g}^n_c(z) - f(c)| = |\mathbf{g}_c^n(z) - \mathbf{g}_c^n(0)| \leq C_0 |z-0|^{\beta} < \lambda|z|,
 \end{equation}
 where $|\cdot|$ is also used to denote the diameter of a set. Since $\mathbf{g}_c^n$ is a multifunction, $\mathbf{g}_c^n(z)$ is a set and \eqref{wvb} means that  if we replace $\mathbf{g}_c(z)$ by any $w$ in  $\mathbf{g}_c(z),$ then \eqref{wvb} holds.

 It follows that $\mathbf{g}^n_c$ maps $B_{d}$ into a subset of the ball $B(f(c), r)$ of radius $r=\lambda d.$ Hence $
\mathbf{g}_c^{n}(B_{d})$ is contained in $B_{r_1},$ where $r_1= \delta + \lambda d.$ Using induction, we conclude that $\mathbf{g}_c^{kn}(B_{d})$ is contained in the ball $B_{r_k}$ where $$r_k = \delta + \lambda \delta + \cdots + \lambda^{k-1} \delta + \lambda^{k}d. $$
Choose $k$ such that $\lambda^{k+1}d < \delta/2.$ It follows that $\mathbf{g}_c^{(k+1)n}(B_{d})$ is contained in the ball $B(f(c), s),$ where $$s=\lambda  r_{k} = \lambda \delta + \lambda^2 \delta + \cdots + \lambda^{k} \delta + \lambda^{k+1} d < \frac{\lambda}{1 -\lambda}\delta + \lambda^{k+1}d < \delta.$$ Combining this with Lemma \ref{xwp} we conclude that  $\mathbf{g}_c^{(k+1)n}(B_{d})$ is a connected open set contained in a disk $D\Subset B_{d} - \{0\}.$ A final application of     Theorem  \ref{rrr} yields the desired result. \end{proof}

The following corollary gives an approximation of $J_c^*$ in terms of $\mathbf{g}_c^n(B_d).$

\begin{cor}\label{cwq} If $\mathcal{A}_{c}$ is a critical system in the conditions of Theorem \ref{ste} and $B_d$ denotes the disk of $\mathcal{A}_c$ containing zero, then for every open set $U$ containing $J_c^*$ there is  $n_0>0 $ such that  $$J_c^* \subset \bigcup_{n>n_0} \mathbf{g}_c^{n}(B_d) \subset U.$$
\end{cor}

\begin{proof} Use Theorem \ref{ste}, Theorem \ref{rrr} and the following general fact: if a CIFS is represented by finitely many maps $f_k: X \to X,$ then any high iterate $H^n(X)$ is an approximation of the limit set, where $H(A)=\bigcup_{k} f_k(A)$ is the Hutchinson operator. 
\end{proof}

\section{Post-critical set} The \emph{post-critical set} of \label{qetdbc} $\mathbf{f}_c$ is $ P_c = \overline{\bigcup_{n>0} \mathbf{f}_c^n(0) }.$ Since  $\mathbf{f}_c(P_c) \subset P_c,$ the complement $\mathbb{C}- P_c$ is backward invariant under $\mathbf{f}_c.$

The set of \label{hfvmwe}\emph{accumulation  points} of $P_c$ is denoted by $P_c'.$ Therefore $z\in P_c'$ iff there is a sequence $z_i \neq z$  in $P_c$ converging to $z.$ 

\begin{thm}[Postcritical sets] \label{dstg} Let $a$ be a simple centre of the family  $\mathbf{f}_c.$ 
\begin{itemize} \item[$(a)$]  Suppose $c=a.$ If $P_a \subset K_a,$ then $P_a'=\emptyset;$ otherwise, $P_a'=\{\infty\}.$ \item[$(b)$]  Suppose $c\neq a$ is sufficiently close to $a.$ Then $P_c'$ is a Cantor set.  If $P_c \subset K_c,$ then  $P_c'=J_c^*; $ otherwise,  $P_c' \supsetneq J_c^*  \cup \{\infty\}.$ 
\item[$(c)$] If $c$ is in the complement of $M_{\beta,0}$,  then $P_c'=\{\infty\}.$ 
\end{itemize}
\label{fhwp}
\end{thm}

\begin{proof} $(a)$  If  $P_a \subset K_a,$ then $P_a$ is bounded. Hence every  forward orbit of zero is trapped into the critical system and, consequently, $P_a$ consists of the finitely many points of an  attracting cycle containing zero. Thus,  $P_a'= \emptyset.$ 

If  $P_a$ is not contained in $K_a,$ then some orbit of zero escapes to infinity. Therefore, $\infty$ belongs to $P_a'.$ Since $a$  is a simple centre, there is a region  $\mathbf{f}_a^{-k}(B_R)$  satisfying the third and fourth properties of Theorem \ref{ppp}. As a consequence of Theorem \ref{fed}, the intersection of $P_a$ with a certain iterate 
 $\mathbf{f}_a^{-k}(B_R)$ is a cycle containing zero; and since every point of $P_a$ which is not in $\mathbf{f}_a^{-k}(B_R)$ is attracted to infinity exponentially fast, there can be no other accumulation point except $\infty.$  Thus, $P_a'= \{\infty\}$.

$(b)$ Suppose $c\neq a$ is sufficiently close to $a.$ If $\mathbf{f}_c^{-k}(B_R)$ is a region satisfying the third and fourth properties of Theorem \ref{ppp}, then every forward orbit of zero is either trapped into the non-critical system $\mathcal{F}_c$ or eventually leaves $\mathbf{f}_c^{-k}(B_R)$. Hence, $$P_c'\cap \mathbf{f}_c^{-k}(B_R) = \omega(0, \mathbf{g}_c) = J_c^*, $$ where the second inequality comes from Theorem  \ref{lqe} and equation \eqref{grw}. By Corollary \ref{thbn}, $J_c^*$ is a Cantor set. Hence, $P_c'\cap \mathbf{f}_c^{-k}(B_R)$ is a Cantor set. If $P_c \subset K_c,$ then in particular $P_c$ is contained in $\mathbf{f}_c^{-k}(B_R),$ and $P_c'$ is therefore a Cantor set.

Now suppose  $P_c$ is not contained in $K_c. $
Then $$ W= P_c'\cap (\mathbb{C} - \mathbf{f}_c^{-k}(B_R))$$ is nonempty. Indeed, $\overline{\varphi(D\cap J_c^*)} $ is a Cantor set contained in $W,$ for every univalent branch $\varphi: D \to \mathbb{C}$ defined in an open set $D$ intersecting $J_c^*.$  Iterations of $\overline{\varphi(D\cap J_c^*)} $ are still contained in $W$ and converge to $\infty$ in the Hausdorff distance. It can be shown that the whole set $W$ can be described in this way. Hence both $W$ and $P_c'$ are Cantor sets containing $\infty.$

$(c)$  If $c$ is in the complement of $M_{\beta,0},$ then $0$ is not in $K_c.$ Hence  every orbit of zero escapes to $\infty.$ It follows that  $P_c'=\{\infty\}.$ 
\end{proof}

\begin{defi} \label{sdboidg}{We shall use  $H_{\beta}$ to denote the union  of the complement of $M_{\beta, 0}$ with a sufficiently   small neighbourhood of the set of  all simple centres $a\in M_{\beta, 0}.$ We shall see in Lemma \ref{lkc} that $H_{\beta}$ is open.}
\end{defi}


\begin{remark}{\normalfont {If  $H_{\beta}$ appears in two different statements, then we have two different sets $H_{\beta};$ their intersection is again another $H_{\beta}$ which works for both statements.}

{In Corollary \ref{dgeqd} we show that every parameter in $H_{\beta}$ is hyperbolic, for some $H_{\beta}.$ }}
\end{remark}

\begin{thm}[Hyperbolic metric] \label{xtl} For every nonzero parameter $c$ in $H_{\beta},$   the set $\mathbb{C} - P_c$ is a hyperbolic Riemann surface and $\mathbf{f}_c$ expands its  hyperbolic metric. Equivalently,  
\begin{equation}\label{qkl} d_c(\varphi(z), \varphi(w)) > d_c(z,w),  \end{equation}
for any $z,w \in U$ and any  forward  holomorphic branch $\varphi: U \to \mathbb{C} - P_c,$  where \label{lkghegd}$d_c$ is the distance function associated with the hyperbolic metric of  \ $\mathbb{C} - P_c.$ 

For every family $\mathcal{B}_{K}$ of branches $\varphi$ as above such that  $\varphi(U)$ is contained in the same compact set $K\subset \mathbb{C} - P_c, $ there is a constant $\rho >1$ such that every $\varphi 
\in \mathcal{B}_K$ expands $d_c$ by the uniform factor $\rho.$

\end{thm}

This theorem does not hold for  $c=0$ because  $\mathbb{C}-P_0=\mathbb{C}^*$ is not hyperbolic.

\begin{proof}  A key role is played by the Schwarz-Pick Lemma.  We have  to show that  if $c\not \in M_{\beta, 0}$ or $c$ is sufficiently close to a simple centre, then $(a)$ $\mathbb{C} - P_c$ is connected and  $P_c$ contains at least two points;    $(b)$ $\mathbf{f}_c^{-1}(P_c) \supsetneq P_c;$ and $(c)$ every holomorphic branch  $\varphi$ whose image is contained in $\mathbb{C} - P_c$ is indeed an expansion of the hyperbolic metric.

$(a)$ If $c=a$ is a simple centre, then $P_a$ contains $\{0, a\},$ and $a\neq 0.$ By Theorem \ref{dstg}, the set of accumulation points $P'_a$ is either empty or $\{\infty\}.$ Hence $\mathbb{C} -P_a$ is connected and its complement in $\hat{\mathbb{C}}$ contains at least three points. We conclude that $\mathbb{C} -P_a$ is a hyperbolic Riemann surface.  

If $c\neq a$ is close to a simple centre, then $P'_c$ is  a Cantor set, and therefore $\mathbb{C} - P_c$ is connected. (All Cantor sets are homeomorphic. Furthermore, a Cantor set never disconnects the plane, as a consequence of Schoenflies Theorem, which states that any homeomorphism between plane Cantor sets can be extended to a homeomorphism of  $\mathbb{C}.$)

By Theorem \ref{dstg}, if $c\not \in M_{\beta,0},$  then $P_c'$ is a single point set and $P_c$ contains an infinite forward orbit of zero converging to $\infty.$ It follows that  $\mathbb{C}- P_c$ is a hyperbolic Riemann surface. 

\vspace{0.2cm}

\noindent $(b)$ Our second claim is  $\mathbf{f}_c^{-1}(P_c) \supsetneq P_c.$ 

{Case 1: suppose $c\neq a$ is sufficiently close to a simple centre $a.$}  Then $0=\mathbf{f}_c^{-1}(c)$ is a point of $\mathbf{f}_c^{-1}(P_c),$ which has a bounded orbit contained in $\operatorname{dom}(\mathcal{F}_c). $ In particular, $0\in K_c.$ Since $P_c\cap K_c \subset \operatorname{dom}(\mathcal{F}_c)$ and $\operatorname{dom}(\mathcal{F}_c)$ avoids $0,$ we conclude that  $0$ is not in $P_c.$ Hence  $\mathbf{f}_c^{-1}(P_c) \supsetneq P_c$ in this case. 

{Case 2: now $c=a$ is a simple centre.}  Suppose for a moment that $\mathbf{f}_a^{-1}(P_a) =P_a.$  Recall that $p\geq 2$ and $q\geq 1.$ Every $z\neq a$ has at least two pre-images and $C=P_a \cap K_a$ is a cycle containing $0.$  Since both $P_a$ and $K_a$ are backward invariant,  $\mathbf{f}_a^{-1}(C)=C.$  Since $a\neq 0,$ the cycle $C$ must contain at least two points; thus $C=\{a\}\cup C_1$, where $\operatorname{card}(C_1)=N \geq 1$ and every element of $C_1$ has $p\geq 2$  pre-images in $C.$ Different points $x,y$ in $C$ determine disjoint sets $\mathbf{f}_a^{-1}(x)$ and $\mathbf{f}_a^{-1}(y)$ contained in $C.$  Proving that $\mathbf{f}_a^{-1}(x)$ and $\mathbf{f}_a^{-1}(y)$ are disjoint is not immediate because $\mathbf{f}_a$ is a multifunction. In this specific case, since $\mathbf{f}_a^{-1}(x)$ and $\mathbf{f}_a^{-1}(y)$ are contained in $C$, if the two sets are not disjoint, then there is a point $z$ of the cycle $C$ with two different images in $C.$ This implies that $0$ has two different bounded orbits under $\mathbf{f}_a$, but this is impossible because $a$ is a simple centre.

Therefore $$ 1+N=\operatorname{card}(C) =\operatorname{card}\mathbf{f}_a^{-1}(C) \geq 1+2N,$$ which is a contradiction. Therefore, $\mathbf{f}_a^{-1}(P_a) \supsetneq P_a.$

{Case 3: suppose $c\not \in M_{\beta,0}.$}  Equivalently, $0$ is not in $ K_c.$
Since $0$ belongs to $\mathbf{f}_c^{-1}(P_c)$, it suffices to show that $0$ is not in $P_c.$  Since $P_c'=\{\infty\},$  if $0\in P_c$ then $0 \in \mathbf{f}_c^{i}(0),$ for some $i>0.$ Hence $0\in P_c$ implies the existence of a  cycle containing $0,$ which contradicts the fact $0 \not \in K_c.$ 

\vspace{0.2cm}
$(c)$ Suppose $c \in H_{\beta}.$ Consider the Riemann surface $$X=\left\{(z,w) \in \mathbb{C}^2: w \not \in P_c\right\}.$$ The two projections $\pi_1(z,w)=z$ and $\pi_2(z,w)=w$ define holomorphic maps on $X.$ Since $\pi_2|_X$ omits at least 2 points of $\mathbb{C},$ it follows that $X$ is hyperbolic. Since $\pi_2: X \to \mathbb{C} - P_c$ is a covering map, it is an isometry (by the Schwarz Lemma). On the other hand, $\pi_1: X \to \mathbb{C} - P_c$ is not onto because $\mathbf{f}_c^{-1}(P_c) \supsetneq P_c.$ Hence $\pi_1$ is a strict contraction with respect to the hyperbolic metrics of $X$ and $\mathbb{C} - P_c.$  If  $\varphi: U \to \mathbb{C} - P_c$ is a holomorphic branch, then $U \subset \mathbb{C} -P_c$ and $\varphi$ is also a branch of $\pi_2 \circ \pi_1^{-1}$; therefore $\varphi$ expands the hyperbolic metric of $\mathbb{C} -P_c.$
\end{proof}

\section{Stability and hyperbolicity}

A subset $\Lambda$ of the plane is a \label{eughen}
\emph{hyperbolic repeller} for $\mathbf{f}_c$ if  $\mathbf{f}_c^{-1}(\Lambda) = \Lambda$ and $\mathbf{f}_c$ expands a conformal metric  $\| \cdot\|$ defined on a neighbourhood of $\Lambda;$ that is, $\sup \|\varphi'(z)\| >1,$ where $\sup$ is taken over all $z\in \Lambda$ and every branch $\varphi$ of $\mathbf{f}_c$ such that $\varphi(z) \in \Lambda.$

The main goal of this section to prove that, if $c\in H_{\beta},$ then $J_c$  is a hyperbolic repeller which is stable by means of holomorphic motions.   In spite of the several complications of iterations of multivalued functions (e.g., uncountably many orbits of a  point)  the dynamics of $\mathbf{f}_c$ is quite well understood when $c\in H_{\beta}.$ The hyperbolic geometry of $\mathbb{C} -P_c$ is responsible for such a nice behaviour.  

  We define  \label{wcmg} $\alpha(z, \mathbf{f}_c)$ as the set of all $\zeta \in \mathbb{C}$ such that  $\zeta$ is a limit point of a backward orbit of $z$ under $\mathbf{f}_c.$   The following result provides approximations of $J_c,$ and reveals that $\alpha(z, \mathbf{f}_c)$ is independent of $z.$

\begin{thm}\label{lqp} For every $c$ in  $H_{\beta},$ the Julia set $J_c$ is a hyperbolic repeller for $\mathbf{f}_c,$ and $J_c^*$ is a hyperbolic attractor for $\mathbf{g}_c.$ If $z$ is in $ \mathbb{C}- P_c,$ then  \begin{equation} \label{plq}
\alpha(z, \mathbf{f}_c) = J_c.
\end{equation}
In particular, backward orbits are dense in $J_c,$ and  for any $\epsilon$-neighbourhood $U$ of  $J_c$ there is  $n_0>0$ such that  $\mathbf{f}_c^{-n}(z) \subset U,$ for every $n > n_0.$ 
\end{thm}

The proof of Theorem \ref{lqp} will be given in sequence of lemmas.

\begin{lem}\label{wqt} If $c$ is sufficiently close to a simple centre, then $J_c^* \subset \operatorname{dom}(\mathcal{A}_c)$ and $J_c$ is contained in the complement of $\operatorname{dom}(\mathcal{A}_c),$ for any critical system $\mathcal{A}_c.$ In particular, $J_c$ is disjoint from $J_c^*.$

\end{lem}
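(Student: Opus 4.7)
The proof splits into two claims: (i) $J_c^* \subset \operatorname{dom}(\mathcal{A}_c)$, and (ii) $J_c \cap \operatorname{dom}(\mathcal{A}_c) = \emptyset$. The common ingredient is that the escaping-region condition \eqref{trc} forces the dynamics of $\mathbf{g}_c$ on $\operatorname{dom}(\mathcal{A}_c)$ to be \emph{single-valued}: at any $z \in D_{i,c}$, only the image $\varphi_{i,c}(z) \in D_{i+1,c}$ lies in $\mathcal{R}_{n,c}$, so the remaining $q-1$ images of $\mathbf{f}_c(z)$ escape $B_R$ within $n$ iterates and hence fall outside $K_c$. Consequently $\mathbf{g}_c$ restricted to $\operatorname{dom}(\mathcal{A}_c)$ is the single-valued cyclic chain $D_{0,c} \to D_{1,c} \to \cdots \to D_{n-1,c} \to D_{n,c} \subset D_{0,c}$.

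For (i), I would apply Theorem \ref{ste} to identify $J_c^*$ with $\omega(0, \mathbf{g}_c)$, and then invoke the description from Remark \ref{rrr}, namely $J_c^* = \bigcup_{i=0}^{n-1} \mathbf{g}_c^i(\Lambda_0)$, where $\Lambda_0$ is contained in a disk $D \Subset B_d - \{0\} = D_{0,c}$. Pushing $\Lambda_0$ forward through the chain of branches yields $\mathbf{g}_c^i(\Lambda_0) \subset D_{i,c}$ for every $i = 0, \dots, n-1$, whence $J_c^* \subset \operatorname{dom}(\mathcal{A}_c)$.

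For (ii), the key observation is that the $n$-fold composition $\mathbf{g}_c^n : D_{0,c} \to D_{n,c}$ is a single-valued holomorphic map with $D_{n,c} \Subset D_{0,c} = B_d$; by the Schwarz--Pick lemma it strictly contracts the hyperbolic metric of $B_d$. Any periodic orbit of $\mathbf{g}_c$ contained in $\operatorname{dom}(\mathcal{A}_c)$ cycles through the chain, so its period is a multiple $kn$ of $n$ and it corresponds to a periodic point of $\mathbf{g}_c^n$ in $B_d$, which is necessarily attracting. Thus $\operatorname{dom}(\mathcal{A}_c)$ contains no repelling cycles; being open, it is disjoint from the closure $J_c$ of the repelling cycles, and in particular $J_c \cap J_c^* = \emptyset$.

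The step requiring the most care is justifying the single-valuedness of $\mathbf{g}_c$ on $\operatorname{dom}(\mathcal{A}_c)$, since it rests on interpreting \eqref{trc} together with the inclusion $K_c \subset \mathcal{R}_{n,c}$ correctly; once that is secured, both the forward invariance needed for (i) and the Schwarz--Pick contraction driving (ii) follow directly from the construction of the critical system.
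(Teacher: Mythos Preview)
Your argument for (i) is correct and matches the paper's invocation of Theorem~\ref{ste} and Remark~\ref{rrr}. The problem is in (ii): the claim that $\mathbf{g}_c^n:D_{0,c}\to D_{n,c}$ is a \emph{single-valued} holomorphic map is false for a critical system. In the chain~\eqref{hdf} the first arrow $B_d\xrightarrow{\mathbf{f}_c}D_{1,c}$ is the full correspondence, not a branch $\varphi_{0,c}$; the escaping-region condition~\eqref{trc} is stated only for $z\in D_{i,c}$ with $i\geq 1$. For $z\in B_d=D_{0,c}$ all $q$ images of $\mathbf{f}_c(z)$ lie in $D_{1,c}\subset\mathcal{R}_{n,c}\subset K_c$, so $\mathbf{g}_c$ is genuinely $q$-valued there. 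Moreover, because $0\in B_d$ (and, for a critical system, $0\in D_{n,c}$), these $q$ images cannot be organised into $q$ global holomorphic branches on $B_d$; thus you cannot feed $\mathbf{g}_c^n$ directly into Schwarz--Pick on $B_d$.

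The paper closes this gap by appealing to the CIFS produced in Theorem~\ref{ste} (via Remark~\ref{rrr}): one first iterates $\mathbf{g}_c^{kn}$ until the image $X$ lies in a disk $D\Subset B_d-\{0\}$, and on $D$ the $q$ branches of $\mathbf{g}_c^n$ \emph{are} well-defined univalent self-maps, so Schwarz--Pick applies and every cycle in $\operatorname{dom}(\mathcal{A}_c)$ is attracting. Equivalently, the paper observes that any cycle meeting $\operatorname{dom}(\mathcal{A}_c)$ is trapped there (escaping region) and has $\omega$-limit $J_c^*$ by Theorem~\ref{ste}, hence lies in the hyperbolic attractor $J_c^*$ and must be attracting. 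Your Schwarz--Pick idea is the right mechanism, but it only becomes available after the extra iteration that pushes the dynamics away from the critical point.
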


\begin{proof} The first assertion follows from Corollary \ref{cwq}, since $\mathbf{g}_c^n(B_d)$ is always contained in  $\operatorname{dom}(\mathcal{A}_c).$  For the second, notice that  if  $\operatorname{dom}(\mathcal{A}_c)$ contains one point of a repelling cycle, then the whole cycle must be contained in $\operatorname{dom}(\mathcal{A}_c)$. But any cycle contained in $\operatorname{dom}(\mathcal{A}_c)$ is attracting. \end{proof}

\begin{lem} \label{lkc} If $c$ is in the complement of $M_{\beta,0}$, then $J_c^*$ is empty. Furthermore,  for every neighbourhood $U$ of $\infty$ there is $n_{0}>0$ such that $\mathbf{f}_c^n(0) \subset U,$ for every $n> n_0.$ {Consequently,  $M_{\beta, 0}$ is closed and $H_{\beta}$ is open. }

\end{lem}

\begin{proof}  The dual Julia set is empty because any attracting cycle attracts a bounded orbit of the critical point (Theorem \ref{lqe}), and in this case, no forward orbit of zero is bounded. 

Let $U_1 \subset U$ be a forward invariant neighbourhood of $\infty.$ We are going to show that $\mathbf{f}_c^{n_0}(0) \subset U_1,$ for some $n_0.$ (Consequently, $\mathbf{f}_c^n(0) \subset U,$ for every $n \geq n_0.$). If no iterate $\mathbf{f}_c^{n_0} (0)$ is contained in $U_1,$  than  by  Lemma \ref{pfv} there is a forward orbit of zero contained in $\mathbb{C}-U_1,$ contradicting the fact that every orbit of zero converges to $\infty$ when $c$ is not  in $ M_{\beta,0}.$

{If $c_0$ is not in $M_{\beta,0}$, then there exist $n$ and a neighbourhood of infinity $U$ such that $\mathbf{f}_{c_0}^{n}(0) \subset U;$  by continuity, $\mathbf{f}_{c}^n(0) \subset U$ for every $c$ in a neighbourhood $V$ of $c_0.$ Hence, $0\not \in K_c$ for every $c\in V.$ Therefore, $c_0$ in an interior point of the complement of $M_{\beta,0}.$ Since $c_0$ is arbitrary, $M_{\beta,0}$  is closed and $H_{\beta}$ is consequently open. }
\end{proof}

\begin{lem}\label{vnm} For any parameter $c$ in $H_{\beta}$ there is a region $\mathcal{D}_c$ such that $$\mathbf{f}_c^{-1}(\mathcal{D}_c) \subset \mathcal{D}_c  \ \ \textrm{and} \ \ 
 J_c \subset \mathcal{D}_c  \Subset \mathbb{C} - P_c. $$

\nid Moreover, for any bounded set $A\Subset \mathbb{C} - P_c$ there is $n>0$ such that  $\mathbf{f}_c^{-n}(A) \subset \mathcal{D}_c.$ \end{lem}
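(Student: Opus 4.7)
The plan is to build $\mathcal{D}_c$ as an escape region $\mathcal{R}_{N,c}$ with a carefully chosen closed neighborhood of $P_c$ removed, and then to establish the preimage-entering statement via the hyperbolic contraction of inverse branches from Theorem~\ref{xtl}. Hypothesis~\hyperlink{H}{$(H)$} splits into two subcases. In the first, $c$ is close to a simple centre $a$, and I invoke a critical system $\mathcal{A}_c$ (Remark~\ref{fed}) with escape region $\mathcal{R}=\mathcal{R}_{N,c}$: by Lemma~\ref{wqt}, $J_c^{*}\subset \operatorname{dom}(\mathcal{A}_c)$ while $J_c$ is disjoint from $\overline{\operatorname{dom}(\mathcal{A}_c)}$, and the bounded part of $P_c$ is concentrated in $\operatorname{dom}(\mathcal{A}_c)$, accumulating on $J_c^{*}$. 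In the second, $c\notin M_{\beta,0}$ and Lemma~\ref{lkc} gives $J_c^{*}=\emptyset$ and $P_c\cap \overline{B_R}$ finite. In either subcase, Theorem~\ref{xtl} makes $\mathbb{C}-P_c$ a connected hyperbolic Riemann surface on which $\mathbf{f}_c$ expands the hyperbolic metric, and $J_c$ is compact in $\mathbb{C}$ and disjoint from $P_c$.

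I take an open set $W_c\subset \mathcal{R}$ containing $P_c\cap \overline{\mathcal{R}}$, disjoint from $J_c$, and satisfying the forward-trapping condition $\mathbf{f}_c(\overline{W_c})\subset \overline{W_c}\cup (\mathbb{C}-\mathcal{R})$. In the simple-centre case this is built as the union of a small neighborhood of $J_c^{*}$ inside $\operatorname{dom}(\mathcal{A}_c)$ (where the critical system furnishes the contraction) with disjoint topological disks about the finitely many transient preperiodic points of $P_c\cap \overline{\mathcal{R}}$ outside that neighborhood; in the other subcase, $W_c$ is simply a disjoint union of small disks about the finite set $P_c\cap \overline{B_R}$. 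Setting $\mathcal{D}_c:=\mathcal{R}\setminus \overline{W_c}$, the four required properties follow. \textbf{Region:} $\mathcal{R}$ is open and connected by Lemma~\ref{xwp}, and removing a disjoint family of closed topological disks preserves planar connectedness. \textbf{Inclusion:} $J_c\subset K_c\subset \mathcal{R}$ and $J_c\cap \overline{W_c}=\emptyset$ by Lemma~\ref{wqt} and the construction. \textbf{Compact containment:} $\overline{\mathcal{D}_c}$ is compact and $W_c$ separates $P_c\cap \overline{\mathcal{R}}$ from it while $P_c\setminus \mathcal{R}$ lies outside $\overline{\mathcal{R}}$. \textbf{Backward invariance:} the complement $\overline{W_c}\cup (\mathbb{C}-\mathcal{R})$ is forward invariant under $\mathbf{f}_c$, combining the escape property of Lemma~\ref{rvb} with the trapping condition on $\overline{W_c}$.

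For the last claim, fix $A\Subset \mathbb{C}-P_c$. First, since $q<p$ and $A$ is bounded, the modulus estimate $|w|^{p/q}\sim|z-c|$ for $w\in \mathbf{f}_c^{-1}(z)$ forces $|z_k|\to 1$ along every backward orbit issuing from $A$, so $\mathbf{f}_c^{-n}(A)\subset \mathcal{R}$ for all large $n$ and thereafter stays inside $\mathcal{R}$ by backward invariance of the escape region. Second, by Theorem~\ref{xtl} the inverse branches of $\mathbf{f}_c$ are strict hyperbolic contractions on the compactum $\overline{A}\subset \mathbb{C}-P_c$ with a uniform ratio $\kappa<1$; iterating, $\mathbf{f}_c^{-n}(A)$ breaks into $p^n$ pieces of hyperbolic diameter at most $\kappa^n\cdot \operatorname{diam}_{\text{hyp}}(A)$, and the accumulation centres of these pieces form a backward-invariant compactum consisting of limits of composed inverse branches at periodic sequences, hence contained in the closure of repelling cycles, which is $J_c$. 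Therefore $\mathbf{f}_c^{-n}(A)$ is eventually inside any given neighborhood of $J_c$, and in particular inside $\mathcal{D}_c$ for $n$ large. The main obstacle I expect lies in realising the trapping neighborhood $W_c$ with all three requirements simultaneously --- containment of the infinite Cantor-like tail of $P_c\cap \overline{\mathcal{R}}$ accumulating on $J_c^{*}$, disjointness from $J_c$, and the forward-trapping condition $\mathbf{f}_c(\overline{W_c})\subset \overline{W_c}\cup (\mathbb{C}-\mathcal{R})$ --- which requires carefully interlacing the CIFS contraction near $J_c^{*}$ (to absorb the transient orbit points once they have entered $\operatorname{dom}(\mathcal{A}_c)$) with small disjoint disks about the finite preperiodic portion of $P_c$ that has not yet been captured.
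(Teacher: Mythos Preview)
Your construction of $\mathcal{D}_c$ is on the right track but over-engineered. In the simple-centre case the paper simply sets $\mathcal{D}_c=\mathcal{R}_{n,c}\setminus\operatorname{dom}(\mathcal{A}_c)$: the escaping-region condition \eqref{trc} already forces $P_c\cap\mathcal{R}_{n,c}\Subset\operatorname{dom}(\mathcal{A}_c)$ and gives the forward-trapping of $\operatorname{dom}(\mathcal{A}_c)$ automatically, so there is no need to manufacture a separate $W_c$ with extra disks about ``transient preperiodic points''. In the case $c\notin M_{\beta,0}$ the paper takes $\mathcal{D}_c=\mathcal{R}_{n,c}$ for suitable $n$, with nothing removed.

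The real problem is your argument for the ``moreover'' clause, which is circular. You invoke a \emph{uniform} contraction ratio $\kappa<1$ for iterated inverse branches and then assert that the shrinking pieces accumulate on $J_c$. But Theorem~\ref{xtl} only gives \emph{strict} contraction; a uniform $\kappa<1$ is available on a fixed compactum $\Subset\mathbb{C}-P_c$, and you have not shown that the successive preimages $\mathbf{f}_c^{-k}(A)$ remain in such a compactum --- that is exactly what the lemma is asserting. Worse, the statement that backward-orbit accumulation points lie in $J_c$ is Theorem~\ref{lqp}, which is proved \emph{after} this lemma and explicitly uses it. The paper avoids all of this with a short forward-dynamics argument: once $A_1:=\mathbf{f}_c^{-k_1}(A)\subset\mathcal{R}_{n,c}$, set $\delta=d(A_1,J_c^*)>0$ and use Corollary~\ref{cwq} to find $j_0$ such that any length-$j_0$ forward $\mathbf{g}_c$-orbit inside $\operatorname{dom}(\mathcal{A}_c)$ ends in $\{d(\cdot,J_c^*)<\delta\}$; then $E_k:=\mathbf{f}_c^{-k}(A_1)\cap\operatorname{dom}(\mathcal{A}_c)$ must be empty for $k>j_0$, since a point of $E_k$ would yield a forward orbit in $\operatorname{dom}(\mathcal{A}_c)$ landing back in $A_1$, contradicting $d(A_1,J_c^*)=\delta$. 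This uses only the CIFS structure of $\mathcal{A}_c$ and no $\alpha$-limit or expansion facts.
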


\begin{proof} Since $J_c$ is the closure of repelling cycles (which are contained in $K_c$) and $K_c$ is closed, it follows that $J_c \subset K_c.$ Therefore, $J_c\subset \mathbf{f}_c^{-m}(B_R),$ for every $m>0.$

First suppose $c$ is sufficiently close to a simple centre $a.$ By Theorem \ref{fed} there is a critical system $\mathcal{A}_c$ and a region $\mathbf{f}_c^{-k}(B_R)$ containing $\operatorname{dom}(\mathcal{A}_c)$ satisfying the third and fourth properties of Theorem \ref{ppp}. Since $\operatorname{dom}(\mathcal{A}_c)$ is a disjoint union of disks, $\mathcal{D}_{c}= \mathbf{f}_c^{-n}(B_{R}) - \operatorname{dom}(\mathcal{A}_c)$ is connected. From  Lemma \ref{wqt}  we conclude that $\mathcal{D}_c$ contains $J_c.$ Since $c$ is sufficiently close to a simple centre,   $P_c\cap \mathbf{f}_c^{-n}(B_{R}) \Subset \operatorname{dom}(\mathcal{A}_c).$ Taking complements, we obtain:  $\mathcal{D}_c \Subset \mathbb{C} - P_c.$

Let $A\Subset \mathbb{C} -P_c $ be a bounded set. It remains to show that  some backward iterate of $A$ is contained in $\mathcal{D}_c.$ Since  $A$ is bounded, there is $r>0$ such that $A\subset B_r. $ From Lemma \ref{rvb},  there is $k_0$ such that $\mathbf{f}_c^{-k_0}(B_r) \Subset B_R. $   If we let $k_1= k_0 +n,$ and define $A_1$ to be $\mathbf{f}_c^{-k_1}(A),$  then   $A_1\Subset \mathbf{f}_c^{-n}(B_{R}).$
Denote the  Euclidean distance $d(A_1, J_c^*)$ by $\delta.$  Since $\mathbb{C} - P_c$ is backward invariant,  $A_1$ is compactly contained in $\mathbb{C} -P_c.$  By Theorem \ref{lqe}, we  have $J_c^* \subset P_c.$ Thus,  $\delta >0.$ Let $U$ denote  $\{z: d(z,J_c^*)< \delta\}.$ 

Let $E_0 = A_1 \cap \operatorname{dom}(\mathcal{A}_c).$ If $E_0 = \emptyset,$ then $A_1 \subset \mathcal{D}_c$ and there is nothing to prove. Otherwise, let $$E_1= \mathbf{f}_c^{-1}(A_1) \cap \operatorname{dom}(\mathcal{A}_c) = \mathbf{f}_c^{-1}(E_0) \cap \operatorname{dom}(\mathcal{A}_c).$$

\noindent (In  the second equality,  we have used the definition of $\mathcal{A}_c$ and the following facts: $A_1 \subset \mathbf{f}_c^{-n}(B_{R}),$ and $\operatorname{dom}(\mathcal{A}_c)$ is forward invariant under $\mathbf{g}_c$).  Inductively, we obtain a sequence  $E_k\subset \operatorname{dom}(\mathcal{A}_c)$ such that  $$E_k=\mathbf{f}_c^{-1}(E_{k-1}) \cap \operatorname{dom}(\mathcal{A}_c)=\mathbf{f}_c^{-k}(A_1) \cap \operatorname{dom}(\mathcal{A}_c). $$  By Corollary \ref{cwq}, there is $j_0$ such that,  if  $z$  is any point having  a backward orbit of length greater than $j_0$ contained in $\operatorname{dom}(\mathcal{A}_c),$ then $z$ is in $U.$ As it turns out, if $E_k$ is nonempty, then there is an orbit $z_k \to \cdots \to z_1 \to z$ in $\operatorname{dom}(\mathcal{A}_c)$ with $z\in A_1.$ If $k>j_0,$ then $z\in U;$ and since $U$ is disjoint from $A_1,$ it follows that $E_k=\emptyset$ if $k> j_0.$ Conclusion: $\mathbf{f}_c^{-k}(A_1) \subset \mathcal{D}_c, $ and a backward iterate of $A$ is contained in $\mathcal{D}_c$ when $c$ is sufficiently close to  a simple centre.  

 Second case:  $c\not \in M_{\beta,0}.$ By Lemma \ref{lkc}, some backward iterate  $\mathbf{f}_c^{-n}(B_R)$ is compactly contained in $\mathbb{C} - P_c.$ We may take $\mathcal{D}_c = \mathbf{f}_c^{-n}(B_R)$ in this case. If $A\subset \mathbb{C} - P_c$ is bounded, then   $A$ is contained in some ball $ B_r$, for which   there is $m$ such that $\mathbf{f}_c^{-m}(B_r) \Subset B_R.$ Therefore, $\mathbf{f}_c^{-(n+m)}(A) \Subset \mathcal{D}_c.$ 
 \end{proof}

\begin{Proof}{of Theorem \ref{lqp}}  Suppose $c\in H_{\beta}$ and  let $z\in \mathbb{C} - P_c.$ 

{First claim:} {\it   $\alpha(z,\mathbf{f}_c)$ is contained in $J_c.$} If $z_*$ is in $\alpha(z, \mathbf{f}_c)$ and $(z_n)$ is a backward orbit of $z$ with a subsequence converging to   $z_*$, then in view of  Lemma  \ref{vnm}, there is $n_1$ such that $z_n \in \mathcal{D}_c$ for $n\geq n_1.$ 
Consider the distance function $d_c$ given by the hyperbolic metric of $\mathbb{C} - P_c$ (Theorem \ref{xtl}). By Lemma \ref{vnm},  an $\epsilon$-neighbourhood of $\mathcal{D}_c$ is compactly contained in $\mathbb{C} - P_c.$  By Theorem \ref{xtl}, there is $\mu  <1$ such that every branch $\varphi: B(w,\epsilon) \to \mathbb{C}$ of $\mathbf{f}_c^{-1}$ with $w\in \mathcal{D}_c$ contracts distances by $\mu.$ Since $z_{n_j} \to z_*,$ we may suppose   $z_{n_1}$ and $z_{n_2}$ are contained in $B(z_*, \epsilon/4)$ and $\mu^{n_2 -n_1} < 1/4.$ Since $B(z_{n_1}, \epsilon) \subset \mathbb{C} -P_c,$  there is a backward orbit $$\cdots U_{j} \xrightarrow{\varphi_j} \cdots U_{n_1+1}\xrightarrow{\varphi_{n_1 +1}} U_{n_1}= B(z_{n_1}, \epsilon) $$
where every $\varphi_j$ is a conformal isomorphism and $z_j \in U_j.$ It follows that $U_j$ is contained in  $ B(z_j, \mu^{j-n_1} \epsilon),$ and so $U_{n_2}$ is contained in $B(z_{n_2}, \epsilon/4) \subset B(z_{n_1}, 3\epsilon/4).$ Hence $U_{n_2} \Subset U_{n_1},$ and by the Banach fixed point Theorem, there is a fixed point of $\varphi_{n_1+1}\circ \cdots \circ   \varphi_{n_2} $ in $U_{n_2},$  which necessarily comes from a repelling cycle of $\mathbf{f}_c.$  Since $U_{n_2} \subset B(z_{n_2}, \epsilon/4) \subset B(z_*, \epsilon/2)$ and $\epsilon$ is arbitrary, we conclude that $z_* \in J_c.$

{Second claim:} {\it $\alpha(z,\mathbf{f}_c) \subset \mathcal{D}_c$ is closed and independent of $z$ in $\mathbb{C} -P_c.$}   Since  $P_c'$  is either a single point set or a Cantor set (see Theorem \ref{dstg}), for every two points $z$ and $w$ in $\mathbb{C} - P_c$ there is  a simply connected set $U \subset \mathbb{C} - P_c$ such that $z,w\in U.$ Given an infinite backward orbit $(z_n)$ of $z,$ a repeated application of Lemma  \ref{solf} yields a  sequence of univalent branches of $\mathbf{f}_c,$ $$\cdots \to U_n \xrightarrow{\varphi_{n}} U_{n-1} \cdots \xrightarrow{\varphi_1} U_0=U,$$ where each $\varphi_i: U_{i} \to U_{i-1}$ is a conformal isomorphism taking $z_i$ to $z_{i-1}.$  
This sequence of maps also produces a backward orbit $(w_n)$ of $w$   with $\varphi_i(w_i) = w_{i-1}.$
By Lemma \ref{vnm} and Theorem \ref{xtl}, there are $n_0$ and $\mu<1$ such that $U_n \subset \mathcal{D}_c$ and $$\operatorname{diam}(U_n) \leq \mu^{n-n_0} \operatorname{diam}(U_{n_0}),  $$  for every $n>n_0.$  Since $z_n$ and $w_n$ belong to $U_n,$ and the diameter of $U_n$ goes to zero as $n\to \infty,$ every limit point of $(z_n)$ is a limit point of $(w_n).$ We  conclude that $\alpha(z, \mathbf{f}_c) \subset \alpha(w, \mathbf{f}_c).$ The same argument may be used to prove the other inclusion. Hence $\alpha(w,\mathbf{f}_c)= \alpha(z, \mathbf{f}_c).$

In order to prove that $\alpha(z, \mathbf{f}_c)$ is closed, let $x$ be the limit of a convergent sequence $(x_{n})$ in  $\alpha(z,\mathbf{f}_c).$ Let $\epsilon_n$ be a sequence of positive real numbers converging to zero.  Since $x_1$ is in $\alpha(z, \mathbf{f}_c),$ there is a backward orbit $$z_{n_1} \to z_{n_1 -1} \to \cdots \to z_1 \to z$$ such that $z_{n_1}$ is in $ \mathbb{C} - P_c$ and $d(z_{n_1}, x_1) < \epsilon_1.$ Since $x_2$ is in $\alpha(z_{n_1}, \mathbf{f}_c) = \alpha(z, \mathbf{f}_c),$ there is a backward orbit $z_{n_2} \to \cdots \to z_{n_1}$ such that $d(z_{n_2}, x_2) < \epsilon_2.$ Inductively, we construct a backward orbit $(z_n)$  of $z$ such that $d(z_{n_j}, x_j) < \epsilon_j,$ from which we conclude that $x$ is in $\alpha(z, \mathbf{f}_c).$ In other words, $\alpha(z, \mathbf{f}_c)$ is closed.

{Third claim:} {\it the set $\alpha(z, \mathbf{f}_c)$ is backward invariant, i.e., $\mathbf{f}_c^{-1}(\alpha(z, \mathbf{f}_c)) = \alpha(z, \mathbf{f}_c).$} \\ 
Let $x$ be in $\alpha(z, \mathbf{f}_c)$ and $\epsilon_n \to 0.$  By Lemma \ref{vnm},  $x$ is in  $\mathcal{D}_c\subset \mathbb{C} -P_c.$ Any pre-image of $x$ is determined by a univalent branch $\varphi$ of $\mathbf{f}_c^{-1}$ at $x.$ We are going to show that $\varphi(x)$ is in $ \alpha(z, \mathbf{f}_c).$ There is a backward orbit $z_{n_1} \to \cdots \to z_1 \to z$ such that $|x-z_{n_1}| < \epsilon_1.$ Define $z_{n_1 +1}$ as $\varphi(z_{n_1}).$ Since $$\alpha(\varphi(z_{n_1}), \mathbf{f}_c) = \alpha(z, \mathbf{f}_c),$$ there is another backward orbit $z_{n_2} \to \cdots \to \varphi(z_{n_1}) \to z_{n_1}$ with $|x-z_{n_2}| < \epsilon_2.$ In this way, we construct a backward orbit $(z_k)$  of $z$ such that $|z_{n_j} -x| < \epsilon_j$ and $z_{n_j +1}= \varphi(z_{n_j}).$ By taking limits, we conclude that $\varphi(x)$ belongs to $\alpha(z, \mathbf{f}_c).$  This proves that $\mathbf{f}_c^{-1}(\alpha(z, \mathbf{f}_c)) \subset \alpha(z, \mathbf{f}_c).$ For the other inclusion it suffices to show that any $\zeta$ in $\alpha(z, \mathbf{f}_c)$ has at least one image in $\alpha(z, \mathbf{f}_c).$ 
 Indeed, if $\zeta$ is in $\alpha(z, \mathbf{f}_c),$ then there is a  backward orbit $(z_i)_0^{\infty}$  of $z$  such that $\zeta = \lim_{n \in N_1} z_n,$ where $N_1$ is an infinite subset of $\mathbb{N}$ ($\lim_{n\in N_1}$ means the limit of the subsequence $(z_n)_{n\in N_1}$). Since $\zeta \in \mathcal{D}_c,$ we have $\zeta\neq 0$ and there are $q$ univalent branches $\phi_1, \cdots, \phi_q$ of $\mathbf{f}_c$ at $\zeta.$ By deleting finitely many $n\in N_1,$ if necessary, we may suppose that  $z_n$ is in the domain of $\phi_i,$ for every $i$ and $n \in N_1.$  If $n\in N_1$, then there is $k_n \in \{1, \cdots, q\}$  such that $\phi_{k_n}(z_{n}) = z_{n -1}.$ The sequence $(k_n)_{ n\in N_1}$ has an  element that repeats infinitely many times,  say $k_1=k_n,$ for every $n$ in an infinite set $N_2 \subset N_1.$ Hence $\phi_{k_1}(z_{n}) =z_{n-1}$ for every $n\in N_2.$ We conclude that $$\phi_{k_1}(\zeta)=\lim_{n\in N_2} \phi_{k_1}(z_n) =\lim_{n\in N_2} z_{n-1} $$ belongs to $\alpha(z, \mathbf{f}_c).$ Hence $\zeta$ has at least one image in $\alpha(z, \mathbf{f}_c),$ as desired.    

{Fourth claim: $J_c \subset \alpha(z,\mathbf{f}_c).$ } Indeed, let $(z_i)_0^n$  be a repelling cycle. Recall that $J_c$ is contained in $\mathbb{C}  - P_c$ when $c$ is in $H_{\beta}.$ By the second claim, every $z_i$ is in $ \alpha(z_0, \mathbf{f}_c) = \alpha(z,\mathbf{f}_c).$  Hence $\alpha(z,\mathbf{f}_c)$ contains all repelling cycles. Since $\alpha(z, \mathbf{f}_c)$ is closed,  $J_c$ is a subset of $\alpha(z, \mathbf{f}_c).$  

Using the first, third and fourth claims, we conclude that  $J_c \subset \mathbb{C} - P_c$ is a hyperbolic repeller. By Theorem \ref{lqe}, $J_c^*$ is a hyperbolic attractor for $\mathbf{g}_c.$  

 Let $U$ be an open set containing $J_c$ and $z\in \mathbb{C} -P_c.$ We want to show that there is $n$ such that $\mathbf{f}_c^{-n}(z)$ is contained in $U.$ Since $J_c$ is a hyperbolic repeller, $U$ contains a smaller  backward invariant neighbourhood of $J_c.$ It suffices to prove that $\mathbf{f}_c^{-n}(z)$ is contained in the smaller neighbourhood, which we denote by $U$ as well. Suppose the assertion $\mathbf{f}_c^{-n}(z) \subset U$ is not true for every $n.$  Since $U$ is backward invariant, for every $n$ there is backward orbit of $z$ with length $n$ which is  contained in the complement of  $U.$ Using Lemma \ref{pfv}
we extract an infinite backward orbit of $z$ in the complement of $U,$ with a limit point in $\mathbb{C} - J_c,$ contradicting $J_c =\alpha(z, \mathbf{f}_c).$
\end{Proof}

\begin{cor} \label{vmm} If $c$ is in $H_{\beta},$ then any $\mathcal{D}_c$  in the conditions of   Lemma \ref{vnm} satisfies \begin{equation}\label{fkk} J_c = \bigcap_{k>0} \mathbf{f}_c^{-k}(\mathcal{D}_c).\end{equation}
In particular, if $w$ has an infinite forward orbit contained in a compact set $E\subset \mathbb{C} - J_c^*,$ then $w\in J_c.$ \end{cor}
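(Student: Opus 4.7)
The plan is to prove the equality \eqref{fkk} by two inclusions and then deduce the ``in particular'' statement. The easy direction $J_c\subseteq\bigcap_{k>0}\mathbf{f}_c^{-k}(\mathcal{D}_c)$ follows from $J_c\subset\mathcal{D}_c$ (Lemma \ref{vnm}) together with the backward invariance $\mathbf{f}_c^{-1}(J_c)=J_c$ extracted from the proof of Theorem \ref{lqp}: iterating gives $J_c=\mathbf{f}_c^{-k}(J_c)\subseteq\mathbf{f}_c^{-k}(\mathcal{D}_c)$ for every $k\geq 1$.

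For the reverse inclusion, fix $w\in\bigcap_{k}\mathbf{f}_c^{-k}(\mathcal{D}_c)$. By Lemma \ref{pfv}, $w$ admits an infinite forward orbit $(w_n)_{n\geq 0}$ with each $w_n\in\mathcal{D}_c$, and this orbit is bounded because $\mathcal{D}_c\Subset\mathbb{C}-P_c$. Let $w^*=\lim_j w_{n_j}\in\overline{\mathcal{D}_c}\subset\mathbb{C}-P_c$ be an accumulation point. The Banach fixed-point construction used in the proof of Theorem \ref{lqp}---taking sufficiently close $w_{n_j},w_{n_{j'}}$ and composing the univalent backward branches realizing $w_{n_{j'}}\mapsto w_{n_j}$, which contract the hyperbolic metric $d_c$ of $\mathbb{C}-P_c$ by Theorem \ref{xtl}---produces repelling periodic points of $\mathbf{f}_c$ arbitrarily close to $w^*$, so $w^*\in J_c$. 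To propagate this back to $w$, note that by compactness of $\overline{\mathcal{D}_c}$ inside $\mathbb{C}-P_c$ and the strict expansion in Theorem \ref{xtl}, there is $\lambda>1$ such that the backward branch $\varphi_n^{-1}$ realizing $w_n\mapsto w_{n-1}$ contracts $d_c$ by at least $\lambda^{-1}$ on a disk around $w_n$ of uniform radius (the critical values of $\mathbf{f}_c^{-1}$ lie in $P_c$ and are uniformly separated from $\overline{\mathcal{D}_c}$). Setting $z_{n_j}^{(j)}=w^*$ and $z_{k-1}^{(j)}=\varphi_k^{-1}(z_k^{(j)})$---each lying in $J_c$ by $\mathbf{f}_c^{-1}(J_c)=J_c$---yields
\[
d_c(w,z_0^{(j)})\;\leq\;\lambda^{-n_j}\,d_c(w_{n_j},w^*)\;\xrightarrow[j\to\infty]{}\;0,
\]
so $w\in\overline{J_c}=J_c$.

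Finally, assume $w$ has a bounded infinite forward orbit $(w_n)$ inside a compact set $E\subset\mathbb{C}-J_c^*$, so that $w\in K_c$. When $c$ is close to a simple centre, $K_c\subset \mathcal{R}_{n,c}=\mathcal{D}_c\sqcup\operatorname{dom}(\mathcal{A}_c)$ by Lemma \ref{wqt}; if any $w_n$ belonged to $\operatorname{dom}(\mathcal{A}_c)$, the forward invariance of $\operatorname{dom}(\mathcal{A}_c)$ under $\mathbf{g}_c$ combined with Theorem \ref{ste} and Corollary \ref{cwq} would force the subsequent orbit into every prescribed neighborhood of $J_c^*$, contradicting $d(E,J_c^*)>0$. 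Hence $(w_n)\subset\mathcal{D}_c$. When $c\notin M_{\beta,0}$, Lemma \ref{lkc} gives $J_c^*=\emptyset$ and $\mathcal{D}_c=\mathcal{R}_{n,c}\supseteq K_c$, so again $(w_n)\subset\mathcal{D}_c$. In both cases \eqref{fkk} delivers $w\in J_c$.

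The principal obstacle is the backward propagation in the second paragraph: one must confirm that the successive iterates $z_k^{(j)}$ stay in the domains of the subsequent inverse branches as they contract along the orbit. This is controlled by the uniform contraction factor $\lambda^{-1}<1$ supplied by Theorem \ref{xtl} and the uniform positive separation $d(\overline{\mathcal{D}_c},P_c)>0$ guaranteed by $\mathcal{D}_c\Subset\mathbb{C}-P_c$, which together keep each $z_k^{(j)}$ well within the univalence disk around $w_k$.
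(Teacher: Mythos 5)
Your proof is correct and follows essentially the same route as the paper's: the easy inclusion from backward invariance of $J_c$ together with $J_c\subset\mathcal{D}_c$, and the hard inclusion by extracting (Lemma \ref{pfv}) an infinite forward orbit inside $\mathcal{D}_c$, taking a limit point, and pulling a point of $J_c$ back along the orbit with uniformly contracting inverse branches in the hyperbolic metric of $\mathbb{C}-P_c$. The only variation is that you first show the limit point $w^*$ itself lies in $J_c$ (adapting the repelling-fixed-point construction from the proof of Theorem \ref{lqp} to forward recurrence) and then pull back $w^*$, whereas the paper avoids that step by placing the limit point and an arbitrary $w\in J_c$ in a common simply connected subset of $\mathbb{C}-P_c$ and pulling $w$ back; both are valid, and your treatment of the ``in particular'' clause matches the paper's.
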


\begin{proof} Since $J_c$ is backward invariant and contained in $\mathcal{D}_c,$  we only need to prove that the nested intersection \eqref{fkk} -- temporarily denoted by $L$ --  is contained in $J_c.$  By Lemma \ref{pfv}, a point $z$ belongs to  $L$  iff   $z $ has an infinite forward orbit  contained in $\mathcal{D}_c.$ If $z$ has an infinite forward orbit in $\mathcal{D}_c$, we are going to show that $d(z, J_c) < \epsilon, $ for every $\epsilon >0,$ where $d$ denotes the hyperbolic distance function of $\mathbb{C} -P_c.$  Indeed, the orbit $(z_n)$ is bounded and has a limit point $z_*$ in $ \mathbb{C} -P_c.$  
By Theorem \ref{xtl}, every univalent branch of $\mathbf{f}_c^{-1}$ contract distances by a factor $\mu<1$ on $\mathcal{D}_c\Subset \mathbb{C}  - P_c.$  Let $w$ be a point of $J_c.$  Since $P_c'$ is either a single point set or a Cantor set, there is a bounded simply connected $U\subset \mathbb{C} -P_c$ containing $\{z_*, w\}.$
  By Lemma \ref{vnm}, there is $n_0$ such that $\mathbf{f}_c^{-n_0}(U) \subset \mathcal{D}_c.$  Since $z_*$ is a limit point, there is $n>2n_0$ such that $z_n \in U$ and $\mu^{n-n_0}\operatorname{diam}(\mathcal{D}_c) < \epsilon,$ where $\operatorname{diam}$ denotes diameter with respect to the hyperbolic metric of $\mathbb{C} -P_c.$

   By Lemma \ref{solf}, there is a backward orbit of $U,$
\begin{equation}\label{plmm}U_0 \xrightarrow{\varphi_0} U_1 \xrightarrow{\varphi_1} \cdots \xrightarrow{\varphi_{n-1}} U_n=U,\end{equation}  given by conformal isomorphisms $\varphi_i: U_i \to U_{i+1},$ with $z_i \in U_i \subset \mathbb{C} - P_c.$ Every $\varphi_i$ is a univalent branch of $\mathbf{f}_c.$ Let $(w_i)_0^n$ be the orbit determined by $w_i \in U_i,$ $\varphi_{i}(w_i) = w_{i+1}$ and $w_n=w.$      Since $n >2n_0,$ we have $$U_{n-n_0}\subset \mathbf{f}_c^{-n_0}(U) \subset \mathcal{D}_c.$$ Therefore, $$ \operatorname{diam}(U_0) \leq \mu^{n-n_0} 
\operatorname{diam} (U_{n-n_0}) \leq \mu^{n-n_0} \operatorname{diam}(\mathcal{D}_c) < \epsilon.$$

\noindent Since $J_c$ is backward invariant (Theorem \ref{lqp}), every point of $(w_i)$ is in $J_c.$ Both $z=z_0$ and $w_0$ belong to $U_0.$ Hence  $d(z_0, w_0) < \epsilon$  and $d(z, J_c) < \epsilon,$ as desired.

Now suppose $(z_i)$ is an infinite forward orbit contained in a compact set $E\subset \mathbb{C} - J_c^*.$  We want to show that every $z_i$ belongs to $J_c.$ Since every $z_i$ has a bounded orbit,  it follows that $z_i\in K_c.$  In particular, the whole sequence $(z_i)$ is contained in  $\mathbf{f}_c^{-n}(B_R),$ for every $n>0.$ For the case where $c$ is sufficiently close to a simple centre,  there exists a critical system $\mathcal{A}_c$ and no point of $(z_i)$ can be in $\operatorname{dom}(\mathcal{A}_c);$ indeed,  if some $z_k$ is in $\operatorname{dom}(\mathcal{A}_c),$ then  there is  a subsequence converging to a point of $J_c^*$ (see Theorem  \ref{ste}), contradicting the fact that $z_i$ never escapes $E.$  

Recall from the proof of Theorem \ref{vnm}   that,  if $c$ is sufficiently close to  a simple centre, then  $\mathcal{D}_c = \mathbf{f}_c^{-n}(B_R) - \operatorname{dom}(\mathcal{A}_c)$. If $c$ is not in $M_{\beta, 0}$ then $\mathcal{D}_c = \mathbf{f}_c^{-n}(B_R),$ for some $n>0.$  In any case,  the sequence $(z_i)$ is contained in $ \mathcal{D}_c.$  Since we have already proved that \eqref{fkk}  holds, it follows that every point of the sequence $(z_i)$ is in $ J_c.$
\end{proof}

\begin{thm}\label{ght} If $c$ is not in  $M_{\beta,0},$ then $J_c=K_c$ and   the dual Julia set is empty. In this case, for any point  $z$ in the complement of $J_c$ and any $\epsilon$-neighbourhood $U$ of $\infty,$ {there exists} $n_0$ such that    $\mathbf{f}^{n}_c(z) \subset U,$  for every $n>n_0.$ 

\end{thm}

\begin{proof} By Lemma \ref{lkc}, if $c$ is not in $M_{\beta,0},$ then some backward iterate  $A=\mathbf{f}_c^{-n_0} (B_R) $ is contained in $ \mathbb{C} - P_c.$  Since $A$ is bounded, by Lemma \ref{vnm} there is $n_1$ such that $\mathbf{f}_c^{-n_1}(A)$ is contained in  $\mathcal{D}_c.$   Let $k_1 = n_0 + n_1.$ Since $\mathbf{f}_c^{-k_1}(B_R)$ is contained in  $\mathcal{D}_c,$ from Corollary \ref{vmm} we have
$$K_c = \bigcap_{n> k_1} \mathbf{f}_c^{-n}(B_R) = \bigcap_{n>0} \mathbf{f}_c^{-n}(\mathbf{f}_c^{-k_1}(B_R)) \subset J_c. $$

\noindent By Theorem \ref{lqp} the Julia set is backward invariant and, therefore, $J_c \subset K_c.$ Hence $J_c= K_c.$

Let $U$ be a neighbourhood of $\infty$ and $z\in \hat{\mathbb{C}}  - J_c.$  We may suppose, without loss of generality, that $U$ is forward invariant.  If  no iterate $\mathbf{f}_c^{n}(z)$ is contained in $U,$ then for every $n$ there corresponds a forward orbit of $z$ with length $n$ which is in the complement of $U.$ Using Lemma \ref{pfv} we extract an infinite forward orbit of $z$ which is in the complement of $U.$ In particular, $z$ has a bounded forward orbit and we conclude that $z\in K_c=J_c,$ which contradicts the assumption.  \end{proof}

\subsection{Hyperbolicity.} Hyperbolic rational maps play a central role in holomorphic dynamics and can be defined in several equivalent ways. We shall present a dynamical definition first and then prove that any parameter in $H_{\beta}$ is hyperbolic.

 The basin of attraction of $J_c^{*}\cup \{\infty\}$  consists of every point $z$ of the Riemann sphere such that $\omega(z, \mathbf{f}_c) \subset J_c^{*} \cup \{\infty\}.$

From the dynamical point of view, hyperbolic correspondences are those which display simplest possible behaviour, in which $J_c$ is \emph{source} and $J_c^* $ is a \emph{sink}. More precisely: 

\begin{defi}[Hyperbolicity] We say that $\mathbf{f}_c$ is hyperbolic, or that $c$ is a hyperbolic parameter, if the basin of attraction of $J_c^*\cup \{\infty\} $  is $\hat{\mathbb{C}}- J_c.$ 

\end{defi}

Both $J_c$ and $J_c^*$ are contained in the filled Julia set $K_c$. For the quadratic family, for example, $J_c$ is the boundary of $K_c.$  Therefore, understanding the dynamics of $\mathbf{g}_c: K_c \to K_c$ is the first step to show that $\mathbf{f}_c$ is hyperbolic when $c\in H_{\beta}.$ Indeed, under the action of $\mathbf{g}_c,$ any orbit of a point in $K_c  - J_c$ is attracted to $J_c^*$:

\begin{thm} \label{mcv}Suppose $c$ is sufficiently close to a simple centre. Then $J_c^*$ is nonempty and is contained in the interior of $K_c.$ The Julia set  $J_c$ is properly contained in  $ K_c.$ For every $z\in K_c - J_c,$  \begin{equation}\label{gegsdw} \omega(z, \mathbf{g}_c) = J_c^*,\end{equation} and for any open set $U\supset J_c^*,$ there is $n_0>0$ such that $\mathbf{g}^n_c(z)\subset U,$  \ for every $n>n_0.$ In particular, forward orbits are dense in $J_c^*.$
\end{thm}

\begin{proof} Since $J_c^*$ is contained in $\operatorname{dom}(\mathcal{A}_c) \subset K_c$  (see Theorem \ref{wqt}),  it follows that $J_c^*$ is contained in the interior of $K_c.$ Since $J_c$ is contained in the complement of $\operatorname{dom}(\mathcal{A}_c),$ we have $J_c\subsetneq K_c.$

Let $U\supset J_c^*$ be an open set and $z\in K_c - J_c.$ We are going to show that there is $n>0$ such that $\mathbf{g}_c^n(z) \subset U.$ Since $\operatorname{dom}(\mathcal{A}_c)$ is forward invariant under $\mathbf{g}_c,$ there is another open set  $U_1 \subset U$ containing $J_c^*$ which is forward invariant under $\mathbf{g}_c.$ It suffices to show that $\mathbf{g}_c^{n}(z) \subset U_1,$ for some $n>0.$ Indeed, if no $\mathbf{g}_c^{n}(z)$ is contained in $U_1,$ then 
for every $n$ there is a finite forward orbit $z_0 \to \cdots \to z_n$  of $\mathbf{g}_c$ with $z_0=z$ and $z_n \not \in U_1.$ Since $U_1$ is forward invariant under $\mathbf{g}_c,$ the whole sequence is in the complement of  $U_1.$  Using Lemma \ref{pfv},  we extract an infinite forward orbit $(z_i)$ under $\mathbf{g}_c$ which is in $\mathbb{C} - U_1.$ Since $(z_i)$ is an orbit of $\mathbf{g}_c,$ we have $z_i\in K_c,$ for every $i.$ Thus $(z_i)$ is an infinite forward orbit contained in the compact set $E=K_c - U_1.$ By Lemma \ref{vmm}, the sequence $(z_i)$ is contained in   $J_c,$ which is a contradiction, since $z_0$ is in $K_c - J_c.$ 

Equation \eqref{gegsdw} follows from Theorem \ref{rrr}.   \end{proof}

{In the following corollary, recall from Definition \ref{sdboidg} that $H_{\beta}$ is the union of the complement of $M_{\beta,0}$ with a sufficiently small neighbourhood of the set of all simple centres. }

\begin{cor}\label{dgeqd}   If $c$ is in $H_{\beta},$ then $\mathbf{f}_c$ is hyperbolic.  \end{cor}

\begin{proof} Since any forward orbit converges exponentially fast to $\infty$ on $\hat{\mathbb{C}} - K_c,$  the corollary is a consequence of Theorems \ref{mcv} and \ref{ght}. 
\end{proof}
\subsection{Sensitive dependence on initial conditions.} The Julia set $J(f)$ of a rational function $f$ satisfies the following property: for any open set $U$ intersecting $J,$ there is $n>0$ such that $$f^n(U\cap J) =J.$$  In general, any mapping  $g: X \to X$ from a topological space that eventually maps every open set $U$ onto the whole space, $g^n(U) = X,$ is called \emph{locally eventually onto}, or LEO. 
This property implies sensitive dependence on initial conditions, leading to a very precise characterisation of \emph{chaos}. 

It is the purpose of this section to prove a similar result for the Julia set of $\mathbf{f}_c$ when $c \in H_{\beta}.$ Roughly speaking, \ this means that the interesting dynamics of $\mathbf{f}_c: \hat{\mathbb{C}} \to \hat{\mathbb{C}}$  is concentrated in $J_c$ when $c\in H_{\beta}.$ We shall see that  $J_c$ is stable by means  of holomorphic motions.

If $\Lambda$ is a hyperbolic repeller of $\mathbf{f}_c,$ then the restriction $\mathbf{f}_c|_{\Lambda}(z)= \mathbf{f}_c(z) \cap \Lambda$ is a well defined multifunction from $ \Lambda$ to $ \Lambda$  (since every point of  $\Lambda$ has at least one image in $\Lambda).$  We say that $\Lambda$ is \label{sdfqeg}\emph{LEO} (with respect to $\mathbf{f}_c$), or locally eventually onto, if for every open set $U$ intersecting $\Lambda,$ there is $n>0$ such that \begin{equation} \label{wtc}\mathbf{f}^n_c|_{\Lambda}(U\cap \Lambda) = \Lambda.\end{equation}
Since the complement of a hyperbolic repeller is forward invariant under $\mathbf{f}_c,$ it follows that \eqref{wtc}  is equivalent to $\mathbf{f}_c^n(U) \supset \Lambda.$

\begin{thm} \label{luf} If $c\in H_{\beta},$ then $J_c$ is a LEO hyperbolic repeller of $\mathbf{f}_c.$  

\end{thm}

\begin{proof}  Due to Theorem \ref{lqp}, we only need to show that $J_c$ is LEO. Let  $U$ be an open set intersecting $J_c.$ Let $d_c$ denote the  hyperbolic distance function of $\mathbb{C} -P_c$ (Theorem \ref{xtl}).  
If $z\in J_c \cap U,$ then some ball $B(z,\delta)$ of radius $\delta>0$ and centre $z$ with respect to $d_c$ is contained in $U.$  Consider a forward orbit $(z_i)$ of $z$  contained in $J_c.$  (Recall that every point of $J_c$ has at least one image in $J_c$). 
Since the sequence $(z_n)$ is bounded, it has a limit point $z_* \in J_c.$  For every $x\in J_c,$ the set $\{x, z_*\}$ is contained in  $\mathbb{C} -P_c \supset J_c.$ Recall from  Theorem \ref{dstg} that 
$P_c'$ is either a single point set or a Cantor set. Hence,  there is a bounded simply connected set $V_{x} \supset \{x,z_*\}$ such that $V_{x} \subset \mathbb{C} - P_c.$  The open cover $\{V_x\}_{x\in J_c}$ of the compact set $J_c$ has a finite sub-cover $\{V_{x_i}\}_{i=1}^{k}$ with $k$ open sets.  
According to Lemma \ref{vnm}, there is a backward invariant  open set $\mathcal{D}_c \Subset \mathbb{C}  - P_c$ such that $J_c \subset \mathcal{D}_c.$

By Theorem \ref{xtl} and Lemma \ref{vnm}, there are $\mu <1$  and $n_0>1$ such that every univalent branch of $\mathbf{f}_c^{-1}$ defined on a subset of $\mathcal{D}_c$ contracts the hyperbolic metric by the factor $\mu,$ and $$k \mu^{n_0} \operatorname{diam}_c(\mathcal{D}_c) < \delta/2   \ \ \textrm{and}  \ \   \bigcup_{i=1}^k \mathbf{f}_c^{-n_0}(V_{x_i}) \subset \mathcal{D}_c,$$  where $\operatorname{diam}_c$ denotes the diameter with respect to $d_c.$  
Let $X= \bigcap_{i=1}^{k} V_{x_i}.$

 Since $z_*$ belongs to $X,$  there is  $n_1 > 2n_0$ such that $z_{n_1} \in X.$ By Lemma \ref{solf}, every $V_{x_i} \subset \mathbb{C} - P_c$ determines a sequence of maps
 $$U_{0,i} \xrightarrow{\varphi_{1,i}} U_{1,i} \xrightarrow{\varphi_{2,i}} \cdots \xrightarrow{\varphi_{n_1,i}} U_{n_1,i}:= V_{x_i}, $$
where each $\varphi_{j,i}: U_{j-1,i} \to U_{j,i}$ is a conformal isomorphism (and also a branch of $\mathbf{f}_c$), $z_j \in U_{j,i},$ and $\varphi_{j,i}$ sends $z_{j-1}$ to  $z_j.$ (This is possible because $z_{n_1} \in V_{x_i}$).  In particular, $z=z_0$ belongs to $U_{0,i},$ for every $i.$

\noindent Let $W= \bigcup_{i} U_{0,i}.$  Since $n_1 > 2n_0$ and $\mathbf{f}_c^{-n_0}(V_{x_i})$ is contained in $\mathcal{D}_c,$ we have $U_{n_0,i} \subset  \mathcal{D}_c.$  
Thus, 
$$\operatorname{diam}_c(U_{0,i}) \leq \mu^{n_0} \operatorname{diam}_c(U_{n_0,i}) \leq \mu^{n_0} \operatorname{diam}_c(\mathcal{D}_c);$$ and since $\bigcap_i U_{0,i}$  is nonempty, 
$$\operatorname{diam}_c(W) \leq \sum_{i=1}^{k} \operatorname{diam}_c(U_{0,i}) \leq k \mu^{n_0} \operatorname{diam}_c(\mathcal{D}_c) < \delta/2. $$ 
It turns out that $W\subset B(z, \delta) \subset U, $ and $$\mathbf{f}_c^{n_1}(U)  \supset \mathbf{f}_c^{n_1}(W) \supset \bigcup_{i} V_{x_i} \supset J_c.$$
Therefore, $J_c$ is a LEO hyperbolic repeller. 
\end{proof}

\subsection{Holomorphic motions.}\label{gvn}

Holomorphic motions arise naturally in the study of structurally stable rational maps.  For holomorphic correspondences on the plane and holomorphic maps in higher dimensions  we need a new definition which allows \emph{branches.} This leads to the definition of branched holomorphic motion originally introduced by Dujardin and Lyubich  \cite{Lyubich2015} for dissipative polynomial automorphisms of $\mathbb{C}^2.$ The following definition is an adaptation for holomorphic correspondences given in \cite{SS17}.

\begin{defi}[Branched holomorphic motion] \label{gehgds}\normalfont 
 Let $\Lambda$ and $U$ be nonempty subsets of $\mathbb{C}$ and suppose $U$ is open.   A multifunction $\mathbf{h}: U\times \Lambda \to \mathbb{C}$ is a branched holomorphic motion with base point $a$ in $U$ if
 (i) $\mathbf{h}_a(z) =\{z\},$ for every $z\in \Lambda;$ and (ii) $$ \bigcup_{z \in \Lambda} \left \{(c,w): c\in U, w\in \mathbf{h}_c(z)   \right \}  = \bigcup_{f\in \mathcal{F}} G_{f}$$ where $\mathcal{F}$ is a family of holomorphic functions $f:U \to \mathbb{C}$ and $G_{f}$ is the graph of $f.$ (Notice that the left-hand side of the above equation is a union of graphs of  multifunctions given by $c\mapsto \mathbf{h}_c(z).$) The holomorphic motion is said to be normal if $\mathcal{F}$ is normal.
 
   \end{defi}

\begin{thm}[\sc Structural stability]  \label{plmd}For every parameter $c_0$ in $H_{\beta}$  there is a normal branched holomorphic motion $$\mathbf{h}: U \times J_{c_0} \to \mathbb{C}$$ with base point $c_0$ such that $\mathbf{h}_c(J_{c_0})=J_c, $
for every $c\in U.$
\end{thm}

{The dual Julia set and the Julia set move continuously for parameters in $H_{\beta}$ (see Theorem \ref{jst}). Since $\mathbf{h}_c$ is multivalued and $J_{c}$ is the image of $J_{c_0}$ under $\mathbf{h}_c,$  the geometry of $J_c$ is likely to change drastically as we move $c $ close to $c_0,$  and therefore we cannot expect $J_c$ to be homeomorphic to $J_{c_0}.$ In particular, the dynamics restricted to the Julia sets cannot be topologically conjugate.  However, $\mathbf{h}_c$ is the projection of a holomorphic motion in $\mathbb{C}^2$ given by a family of conjugacies in the usual sense, see \cite{SS17}.  }

  Every hyperbolic repeller (such as $J_c$ when $c\in H_{\beta}$) is a particular example of \emph{hyperbolic set}, as defined in  \cite{SS17}.  According to Theorem 3.4 of \cite{SS17}, hyperbolic Julia sets of $\mathbf{f}_c$ move holomorphically, in the sense that if $J_{c_0}$ is a hyperbolic set, then there exist  a normal branched holomorphic motion $\mathbf{h}: U \times J_{c_0} \to \mathbb{C}$ with base point $c_0,$ and  an open set $\Omega$ containing $J_{c_0}$  such that $$\mathbf{h}_c(J_{c_0}) = \Omega \cap J_c, $$ for every $c\in U.$ In addition, if $c\mapsto J_c$ is continuous at $c_0,$ then $J_c$ remains within $\Omega$ for every $c$ sufficiently close to $c_0.$ In this way, Theorem \ref{plmd} is a corollary  of the following result. 
 
\begin{thm}[Stability] \label{jst} The set functions $c\mapsto J_c^*$ and $c\mapsto J_c$ are continuous at every $c \in H_{\beta}.$ 
\end{thm}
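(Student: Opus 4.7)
The plan is to treat the two set functions separately and, within each, to separate lower from upper semicontinuity in the Hausdorff metric. Throughout I fix a base parameter $c_{0}$ satisfying $(H)$ and a sequence $c_{j}\to c_{0}$.

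For $c\mapsto J_{c}^{*}$ I would split into three cases according to $(H)$. If $c_{0}\notin M_{\beta,0}$, then some $\mathbf{f}_{c_{0}}^{n}(0)$ lies in $\mathbb{C}-B_{R}$, an open condition on $c$, so $c\notin M_{\beta,0}$ for $c$ near $c_{0}$; by Lemma \ref{lkc} the function $c\mapsto J_{c}^{*}$ is identically empty there. If $c_{0}=a$ is a simple centre, Theorem \ref{ppp} furnishes systems of branches $\mathcal{F}_{c,d_{c}}$ for $c\neq a$ near $a$ with $d_{c}\to 0$, and Theorem \ref{lqe} gives $J_{c}^{*}=\Lambda(\mathcal{F}_{c})\subset\bigcup_{i}D_{i,c}$; since each $D_{i,c}$ shrinks to the cycle point $\{z_{i}\}$ as $c\to a$, Hausdorff convergence $J_{c}^{*}\to J_{a}^{*}=\{0,z_{1},\ldots,z_{n-1}\}$ follows. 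If $c_{0}\neq a$ is close to a simple centre, the contractions $f_{k,c}$ of \eqref{qxt} depend holomorphically on $c$ and stay uniformly contracting on a neighbourhood of $c_{0}$; the classical Hutchinson--Barnsley continuity of CIFS limit sets then yields Hausdorff continuity of the base limit set $\Lambda_{0}(c)$, and the decomposition \eqref{rcv} together with holomorphic dependence of $\mathbf{g}_{c}$ upgrades this to continuity of $J_{c}^{*}$.

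For $c\mapsto J_{c}$, lower semicontinuity comes from persistence of repelling cycles. Each repelling periodic point of $\mathbf{f}_{c_{0}}$ arises as a non-degenerate fixed point of a composition of univalent local branches whose derivative has modulus $>1$; the implicit function theorem produces a holomorphic continuation through $c$ near $c_{0}$, still repelling, so every $z_{0}\in J_{c_{0}}$ is approximated by repelling periodic points of $\mathbf{f}_{c_{j}}$, hence by points of $J_{c_{j}}$. For upper semicontinuity I would work from Corollary \ref{vmm}: choose $\mathcal{D}_{c}$ varying continuously with $c$ near $c_{0}$ (take $\mathcal{D}_{c}=\mathcal{R}_{n,c}-\operatorname{dom}(\mathcal{A}_{c})$ with a fixed $n$ in the simple-centre case, or $\mathcal{D}_{c}=\mathbf{f}_{c}^{-n}(B_{R})$ otherwise), so that $J_{c}=\bigcap_{k>0}\mathbf{f}_{c}^{-k}(\mathcal{D}_{c})\Subset\mathbb{C}-P_{c}$. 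Given $z_{j}\in J_{c_{j}}$ with $z_{j}\to z_{*}$, each $z_{j}$ admits an infinite backward $\mathbf{f}_{c_{j}}$-orbit inside $\mathcal{D}_{c_{j}}$; by the uniform hyperbolic contraction of inverse branches on $\mathcal{D}_{c}$ from Theorem \ref{xtl} and a standard diagonal extraction, a subsequence of these backward orbits converges to an infinite backward $\mathbf{f}_{c_{0}}$-orbit of $z_{*}$ inside $\mathcal{D}_{c_{0}}$, forcing $z_{*}\in\bigcap_{k}\mathbf{f}_{c_{0}}^{-k}(\mathcal{D}_{c_{0}})=J_{c_{0}}$.

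The main obstacle is the diagonal/normal-families step in the upper-semicontinuity argument: one must ensure that univalent inverse branches of $\mathbf{f}_{c_{j}}^{-1}$ defined on (open neighbourhoods of points in) $\mathcal{D}_{c_{j}}$ have compact-convergent subsequences on $\mathcal{D}_{c_{0}}$ to univalent inverse branches of $\mathbf{f}_{c_{0}}^{-1}$. This in turn rests on two uniform estimates: that $P_{c}$ stays at positive hyperbolic distance from $\mathcal{D}_{c}$ locally uniformly in $c$ (which reduces to the continuity of $c\mapsto J_{c}^{*}$ proved above, since the accumulation set $P_{c}'$ is either $\{\infty\}$, $J_{c}^{*}$, or the Cantor hull of $J_{c}^{*}\cup\{\infty\}$), and that the hyperbolic contraction factor $\lambda<1$ of inverse branches on $\mathcal{D}_{c}$ can be chosen independent of $c$ on a neighbourhood of $c_{0}$. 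A secondary delicate point is the degenerate Hausdorff limit at a simple centre $c_{0}=a$, where the Cantor-set attractor collapses to a finite cycle; there the quantitative control $d_{c}\to 0$ from Theorem \ref{ppp} is precisely what one needs to close the estimate.
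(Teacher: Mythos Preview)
Your treatment of $c\mapsto J_c^*$ is correct and essentially the same as the paper's, case by case.

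Your argument for $c\mapsto J_c$ contains a genuine gap in the upper-semicontinuity step. You claim that an infinite backward $\mathbf{f}_{c_0}$-orbit of $z_*$ inside $\mathcal{D}_{c_0}$ forces $z_*\in\bigcap_k\mathbf{f}_{c_0}^{-k}(\mathcal{D}_{c_0})$. This is false: $\mathcal{D}_{c_0}$ is backward invariant by construction (Lemma~\ref{vnm}), so \emph{every} point of $\mathcal{D}_{c_0}$ has an infinite backward orbit in $\mathcal{D}_{c_0}$, yet $\mathcal{D}_{c_0}\supsetneq J_{c_0}$. Membership in $\bigcap_k\mathbf{f}_{c_0}^{-k}(\mathcal{D}_{c_0})$ is characterised by \emph{forward} orbits staying in $\mathcal{D}_{c_0}$, not backward ones. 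You could repair this by extracting forward orbits instead (each $z_j\in J_{c_j}$ has a forward orbit in $J_{c_j}\subset E$; a diagonal limit gives a bounded forward $\mathbf{f}_{c_0}$-orbit of $z_*$ in a compact set disjoint from $J_{c_0}^*$, and Corollary~\ref{vmm} then gives $z_*\in J_{c_0}$), but then the ``uniform hyperbolic contraction of inverse branches'' you invoke plays no role, and you need instead closedness of the graph of the correspondence together with $E\cap J_{c_0}^*=\emptyset$.

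For comparison, the paper does not split into lower and upper semicontinuity at all. It proves $d_H(J_c,J_{c_0})<\epsilon$ directly by a shadowing argument: given a backward $\mathbf{f}_{c_0}$-orbit $(z_n)$, one builds an $\epsilon$-close backward $\mathbf{f}_c$-orbit $(\zeta_n)$ using the explicit relation that if $\varphi$ is a branch of $\mathbf{f}_{c_0}^{-1}$ then $w\mapsto\varphi(w-c+c_0)$ is a branch of $\mathbf{f}_c^{-1}$. The one-step deviation is bounded by $C|c-c_0|$ and accumulates geometrically under the uniform contraction $\lambda<1$, so $d_{c_0}(\zeta_n,z_n)\le\sum_i\lambda^iC|c-c_0|$. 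Since $\alpha(z,\mathbf{f}_{c_0})=J_{c_0}$ and $\alpha(\zeta_n,\mathbf{f}_c)=J_c$ (Theorem~\ref{lqp}), both inclusions $J_{c_0}\subset(J_c)_\epsilon$ and $J_c\subset(J_{c_0})_\epsilon$ fall out symmetrically. The uniform estimate you correctly flag as the main obstacle---a compact $E$ with $J_c\subset\operatorname{int}(E)\subset(E)_{3\epsilon_0}\subset\mathbb{C}-P_c$ for all $c$ near $c_0$, and a $\lambda<1$ independent of $c$---is isolated as a separate Lemma~\ref{ncv}, proved by controlling how the escaping regions $\mathcal{R}_{m,c}$ and the domains $\operatorname{dom}(\mathcal{A}_c^i)$ nest as $c$ varies. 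The paper also explicitly excludes $c_0=0$ (where $\mathbb{C}-P_0$ is not hyperbolic) and defers that case to another reference; your outline does not address this.
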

The proof of Theorem \ref{jst} will be given in the remaining pages of this paper, after the following lemma (its proof can be skipped at a first reading). 
   
   \begin{lem}\label{ncv} If $c_0$ is in $H_{\beta}$, then there exist   a neighbourhood $V$ of $c_0,$ a compact set $E,$  and $\epsilon_0>0$ such that \begin{equation} \label{tes}J_c \subset \operatorname{int}(E) \subset (E)_{3\epsilon_0} \subset \mathbb{C} - P_c, \end{equation}
for every $c\in V,$ where $(E)_{3\epsilon_0}$ is the set of all $z$ such that $d(z, E) < 3\epsilon_{0}$ and $d$ is the Euclidean distance.

\end{lem}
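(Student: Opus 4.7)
The plan is to build $E$ as a slight thickening of the set $\mathcal{D}_{c_0}$ provided by Lemma \ref{vnm}, and then to verify that its $3\epsilon_0$-neighbourhood continues to avoid $P_c$ and to contain $J_c$ for every $c$ in a small neighbourhood $V$ of $c_0$. I split according to hypothesis $(H)$. In Case 1, when $c_0$ is sufficiently close to a simple centre $a$, I invoke Remark \ref{fed} to fix $d$ and a neighbourhood $V_0$ of $c_0$ on which the critical system $\mathcal{A}_{c,d}$ is well defined, with disks $D_{i,c}$ depending continuously on $c$; then I fix $n$ so that $\mathcal{R}_{n,c}=\mathbf{f}_c^{-n}(B_R)$ is an escaping region containing $\operatorname{dom}(\mathcal{A}_c)$, and set $\mathcal{D}_c=\mathcal{R}_{n,c}-\overline{\operatorname{dom}(\mathcal{A}_c)}$. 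In Case 2, when $c_0\notin M_{\beta,0}$, I set $\mathcal{D}_c=\mathbf{f}_c^{-n}(B_R)$ for $n$ large enough that every critical orbit of $\mathbf{f}_{c_0}$ has left $B_R$ by step $n$.

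In both cases the outer boundary $\partial\mathcal{R}_{n,c}=\mathbf{f}_c^{-n}(\partial B_R)$ and, in Case 1, the inner boundary $\partial\operatorname{dom}(\mathcal{A}_c)$ depend continuously on $c$, so the Hausdorff distance between $\overline{\mathcal{D}_c}$ and $\overline{\mathcal{D}_{c_0}}$ tends to $0$ as $c\to c_0$. I fix $R'$ with $\overline{\mathcal{D}_{c_0}}\subset B_{R'}$; since $\mathcal{D}_{c_0}\Subset\mathbb{C}-P_{c_0}$, the number $\delta_0=d(\overline{\mathcal{D}_{c_0}},\,P_{c_0}\cap\overline{B_{R'+1}})$ is strictly positive. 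I then set $\epsilon_0=\delta_0/10$ and $E=\overline{(\mathcal{D}_{c_0})_{\epsilon_0}}$. By construction $E$ is compact, $J_{c_0}\subset\mathcal{D}_{c_0}\subset\operatorname{int}(E)$, and $d(E,\,P_{c_0}\cap\overline{B_{R'+1}})\geq 9\epsilon_0$.

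It remains to shrink $V\subset V_0$ so that \eqref{tes} persists for every $c\in V$. The inclusion $J_c\subset\operatorname{int}(E)$ follows from $J_c\subset\mathcal{D}_c$ together with $\mathcal{D}_c\subset(\mathcal{D}_{c_0})_{\epsilon_0/2}\subset\operatorname{int}(E)$, which holds for $c$ close enough to $c_0$ by the Hausdorff continuity of $\mathcal{D}_c$. The hard part is the persistence of $(E)_{3\epsilon_0}\subset\mathbb{C}-P_c$, which is a form of upper semicontinuity of the post-critical set. I would prove it by decomposing $P_c\cap\overline{B_{R'+1}}$ into a finite initial segment $\bigcup_{k\le N}\mathbf{f}_c^k(0)$, which depends continuously on $c$, and a tail. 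In Case 1 the tail lies in $\overline{\operatorname{dom}(\mathcal{A}_c)}$ by forward invariance under $\mathbf{g}_c$, and $\overline{\operatorname{dom}(\mathcal{A}_c)}$ moves continuously with $c$ and is disjoint from $(E)_{3\epsilon_0}$ by the choice of $\epsilon_0$; the escape estimate of Lemma \ref{rvb} lets me pick $N$ uniformly in $c\in V_0$. Case 2 is settled by the same uniform escape bound, with no tail inside $\overline{B_{R'+1}}$ at all. Combining these facts forces $P_c\cap\overline{B_{R'+1}}$ into an arbitrarily small neighbourhood of $P_{c_0}\cap\overline{B_{R'+1}}$ as $c\to c_0$, giving $d(E,P_c)>3\epsilon_0$ and hence $(E)_{3\epsilon_0}\subset\mathbb{C}-P_c$ as required.
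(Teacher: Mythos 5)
Your overall architecture (thicken the region $\mathcal{D}_{c_0}$ of Lemma \ref{vnm} and show the thickening still separates $J_c$ from $P_c$ for nearby $c$) matches the paper's, and your Case 2 is fine, but Case 1 has a genuine gap in the control of $P_c$. You decompose $P_c\cap\overline{B_{R'+1}}$ into a finite initial segment $\bigcup_{k\le N}\mathbf{f}_c^k(0)$ plus a tail lying in $\overline{\operatorname{dom}(\mathcal{A}_c)}$ ``by forward invariance under $\mathbf{g}_c$.'' This conflates $\mathbf{g}_c$-orbits with $\mathbf{f}_c$-orbits: $P_c$ is the closure of the \emph{full} $\mathbf{f}_c$-orbit of $0$, and at every passage of the critical orbit through a disk $D_{i,c}$ of the critical system, $q-1$ of the $q$ images leave $K_c$. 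This produces points $\mathbf{f}_c^k(0)$ with $k$ arbitrarily large that lie outside $\operatorname{dom}(\mathcal{A}_c)$ yet still inside $\overline{B_{R'+1}}$ (they need several more steps to cross $\partial B_{R'+1}$), and these accumulate on Cantor sets outside $\operatorname{dom}(\mathcal{A}_c)$ --- exactly the ``copies of Cantor sets converging to $\infty$'' in the description of $P_c'$ on page \pageref{fhwp}. Hence no finite $N$ exhausts the part of $P_c\cap\overline{B_{R'+1}}$ outside $\operatorname{dom}(\mathcal{A}_c)$, and the asserted upper semicontinuity of $P_c\cap\overline{B_{R'+1}}$ does not follow from your decomposition. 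The paper avoids this by never comparing $P_c$ with $P_{c_0}$ directly: it arranges $(E)_{3\epsilon_0}\subset\mathcal{R}_{m,c}-\operatorname{dom}(\mathcal{A}_c^4)$ for a \emph{shallower} depth $m$ and all nearby $c$, a region which avoids $P_c$ wholesale because the escaping region is backward invariant (post-critical points that have left $\mathcal{R}_{m,c}$ never return). This is achieved by the depth-shifted sandwich $\mathcal{R}_{m+2,c}\subset\mathcal{R}_{m+1,c_0}\subset\mathcal{R}_{m,c}$ of Step 1 together with four nested critical systems as in \eqref{xtb}.

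A secondary weakness: your key inclusion $\mathcal{D}_c\subset(\mathcal{D}_{c_0})_{\epsilon_0/2}$ rests on the bare assertion that $\partial\mathcal{R}_{n,c}=\mathbf{f}_c^{-n}(\partial B_R)$ and that $\overline{\mathcal{D}_c}$ varies Hausdorff-continuously in $c$. That identification of the boundary needs justification, and the (upper semi)continuity of the escaping regions is precisely the technical content of the paper's Step 1; the paper deliberately proves only the weaker depth-shifted inclusions \eqref{pwq}, exploiting the strict nesting $\mathcal{R}_{m+1,c_0}\Subset\mathcal{R}_{m,c_0}$ from Lemma \ref{rvb}, rather than fixed-depth Hausdorff continuity. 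You should either prove the continuity you invoke by the same compactness arguments or adopt the depth-shift device; and in either case you must replace the ``finite segment plus tail'' treatment of $P_c$ by an argument that keeps $(E)_{3\epsilon_0}$ inside an escaping region at the perturbed parameter.
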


   \begin{proof}\label{sdt}
By Lemma \ref{rvb}, we can fix an escaping radius $R$ such that $R -1$ is also an escaping radius of $\mathbf{f}_c,$ for every $c $ in a neighbourhood $V_0$ of $c_0.$  By taking $R$ sufficiently large, we may  suppose that  \begin{equation} \label{ifi} |z| \geq R -1 \implies |\mathbf{f}_c(z)| > R, \end{equation} for every $c\in V_0.$ As usual, by \label{huiji}$|\mathbf{f}_c(z)| > R $  we mean that $|w| >R,$ whenever $w\in \mathbf{f}_c(z).$ 

\parr{Step 1 (sandwich).}  We are going to show that: {\it for every $m>1,$ there is a neighbourhood $V_m \subset V_0$ of $c_0$  such that \begin{equation}\label{pwq} \mathbf{f}_c^{-m-2}(B_{R}) \subset \mathbf{f}_{c_0}^{-m-1}(B_{R}) \subset \mathbf{f}_c^{-m}(B_{R}), \end{equation}
for every $c\in V_m.$ }
\vspace{0.2cm}

\noindent Since $\mathbf{f}_{c_0}^{-m-2}(B_{R}) \Subset \mathbf{f}_{c_0}^{-m-1}(B_{R})$ there exists $\epsilon >0$ such that  $$(\mathbf{f}_{c_0}^{-m-2}(B_{R}))_{\epsilon} \subset \mathbf{f}_{c_0}^{-m-1}(B_{R}).$$ The set $E= \hat{\mathbb{C}} - (\mathbf{f}_{c_0}^{-m-2}(B_{R}))_{\epsilon}$ is compact.  Since $E$ is compact and contained in the complement of $(\mathbf{f}_{c_0}^{-m-2}(B_{R}))_{\epsilon},$  we have $|\mathbf{f}_{c_0}^{m+2}(z)| >R,$ for every $z\in E.$  The following is a standard compactness argument: by continuity, for every point $z$ of $E$ there are a neighbourhood $U_z$  of $z$ and a neighbourhood $W_z$ of $c_{0}$ such that $|\mathbf{f}_c^{m+2}(w)| > R,$ for every $(c,w)$ in $W_z \times U_z.$ Since $E$ is compact, there is a finite cover $\{U_{z_i}\};$ and if $V_m$ is the intersection of $V_0$ with $\cap_{i} W_{z_i},$ then $|\mathbf{f}_c^{m+2}(z)| >R,$ for every $(c,z)$ in $V_m\times E.$ In other words, $E \subset \hat{\mathbb{C}} - \mathbf{f}_c^{-m-2}(B_{R})$,
 for every $c\in V_m.$ By taking complements,
 $$ \mathbf{f}_c^{-m-2}(B_{R}) \subset \hat{\mathbb{C}} - E= \mathbf{f}_{c_0}^{-m-2}(B_{R}) \subset \mathbf{f}_{c_0}^{-m-1}(B_{R}), \ \ c\in V_m.$$
 
 This proves the first inclusion of \eqref{pwq}.
 
 We shall need the following property: since $R$ is an escaping radius of every $\mathbf{f}_c$ with $c\in V_0,$ 
 \begin{quote} \hypertarget{P}{$(P)$} the set  $A=\{|z| > R\}$ is contained in $$\{z: |\mathbf{f}_c^{m+1}(z)| \geq R\}=\{ z: |w| \geq R   \ \textrm{whenever $w\in \mathbf{f}_c^{m+1}(z)$} \},  $$ for every $c\in V_m.$ 
 \end{quote}
It follows that $\{z: |\mathbf{f}_c^{m}(z)| \geq R\} - A$ is contained in  the compact set $E_1=\{|z| \leq R\},$ for every $c\in V_m.$ By continuity, given $\epsilon>0$ we may reduce $V_m$ so that \begin{equation}\label{idi} d_H(\mathbf{f}_c^{m}(z), \mathbf{f}_{c_0}^m(z)) < \epsilon,\end{equation} whenever $c\in V_m$ and $z\in E_1,$ where $d_H$ denotes Hausdorff distance. (Here we have used another compactness argument to prove uniform continuity).  Take $\epsilon=1$ and let $c\in V_m.$  We want to show that $\{|\mathbf{f}_c^{m}(z)| \geq R\}$ is contained in $\{|\mathbf{f}_{c_0}^{m+1}(z)| \geq R\};$ equivalently, $\mathbf{f}_{c_0}^{-m-1}(B_{R})  \subset \mathbf{f}_c^{-m}(B_{R}),$ which completes Step 1. 
   Indeed, if  $z$ is in $\{|\mathbf{f}_c^{m}(z)| \geq R\} - A,$ then  $z \in E_1, $ and by  \eqref{idi} we have $|\mathbf{f}_{c_0}^{m}(z)| \geq R -1.$ From \eqref{ifi} we conclude that $|\mathbf{f}_{c_0}^{m+1}(z)| >R.$ On the other hand, if  $z$ is in $\{|\mathbf{f}_c^{m}(z)| \geq R\} \cap A,$  then by \hyperlink{P}{$(P)$} the point $z$ is also in $\{|\mathbf{f}_{c_0}^{m+1}(z)| \geq R\}.$ 
  
  Step 1 is completed. 
  
  \parr{Step 2: prove \eqref{tes} for $c_0$ close to a simple centre. } Let $a$ be a simple centre. Using Theorem \ref{fed} we construct four critical systems  $\mathcal{A}_c^i,$ with $1\leq i\leq 4,$ parameterised in a neighbourhood $U$ of $a,$ such that
  \begin{equation}\label{xtb}
  \operatorname{dom}(\mathcal{A}_{c_4}^4) \subset 
  \overline{\operatorname{dom}(\mathcal{A}_{c_3}^{3})} \subset \operatorname{dom}(\mathcal{A}_{c_2}^2) \subset \operatorname{dom}(\mathcal{A}_{c_1}^{1}),
  \end{equation}
  \noindent for every $c_1, c_2, c_3$ and $c_4$ in $U.$

According to Theorem \ref{fed}, there  is $m>1$ such that $\mathbf{f}_c^{-m}(B_{R})$ is a region satisfying the conditions $(c)$ and $(d)$ of Theorem \ref{ppp} with respect to all critical systems $\mathcal{A}_c^i$, for any $i$ and $c\in U.$  Hence, the iterates of the critical point either remain in $\operatorname{dom}(\mathcal{A}_c^i)$ or are sent to the complement of $\mathbf{f}_c^{-m}(B_R).$ In particular, $\mathbf{f}_c^{-m}(B_{R}) - \operatorname{dom}(\mathcal{A}_c^4)$ is contained in $\mathbb{C} - P_c.$

According to Step 1, there is a neighbourhood $V_m \subset U$ of $c_0$ such that $\mathbf{f}_{c_0}^{-m-1}(B_{R}) \subset \mathbf{f}_c^{-m}(B_{R}),$ for every $c\in V_m.$ If $n>m,$ then $\mathbf{f}_{c_0}^{-n-1}(B_{R}) \Subset \mathbf{f}_{c_0}^{-m-1}(B_{R}).$ In view of Step 1, we may reduce $V_m$ so that $\mathbf{f}_c^{-n-2}(B_{R})\subset \mathbf{f}_{c_0}^{-n-1}(B_{R}),$ for every $c$ in $V_m.$ By Lemma \ref{wqt},

\begin{equation}\label{xx}
\begin{split}
J_c \subset \mathbf{f}_c^{-n-2}(B_{R}) - \operatorname{dom}(\mathcal{A}_c^1) &\subset \mathbf{f}_{c_0}^{-n-1}(B_{R}) - \operatorname{dom}(\mathcal{A}_{c_0}^2)\\
 & \Subset \mathbf{f}_{c_0}^{-m-1}(B_{R})- \overline{\operatorname{dom}(\mathcal{A}_{c_0}^{3})} \\
 & \subset \mathbf{f}_c^{-m}(B_{R}) - \operatorname{dom}(\mathcal{A}_c^{4}) \subset \mathbb{C} -P_c,
\end{split}
\end{equation}
for every $c\in V_m.$  If $E$ is the closure of  $\mathbf{f}_{c_0}^{-n-1}(B_{R}) - \operatorname{dom}(\mathcal{A}_{c_0}^2),$ then $E$ satisfies \eqref{tes} for some $\epsilon_0.$ 

\parr{Step 3: prove \eqref{tes} when $c_0$ is not in $M_{\beta,0}.$ }   According to Lemma \ref{rvb}, there is an escaping radius $R$  that holds for  every $\mathbf{f}_c$ with $c$ sufficiently close to $c_0.$ Since $|\mathbf{f}_{c_0}^{m}(0)| > R,$ for some $m>1,$ by continuity we conclude that $|\mathbf{f}_c^m(0)| >  R,$ for every $c$ in a neighbourhood $V$ of $c_0.$  This implies $\mathbf{f}_c^{-m}(B_{R}) \subset \mathbb{C} -P_c,$ for if  $z$ belongs to $P_c \cap \mathbf{f}_c^{-m}(B_{R}),$ then in particular it belongs to $P_c \cap B_R;$
 and since $R$ is an escaping radius, $z$ must be in $\mathbf{f}_c^{i}(0),$ for some natural number $i<m.$ Hence $|\mathbf{f}_c^m(z)| >R,$ which contradicts $z\in \mathbf{f}_c^{-m}(B_{R}).$  By Step 1, we can find $n>m+1$ and replace $V$ by a smaller neighbourhood of $c_0$  such that, for every $c\in V,$
$$J_c \subset \mathbf{f}_c^{-n-1}(B_{R})\subset \mathbf{f}_{c_0}^{-n}(B_{R}) \Subset \mathbf{f}_{c_0}^{-m-1}(B_{R}) \subset \mathbf{f}_c^{-m}(B_{R}) \subset \mathbb{C} -P_c. $$

This  proves \eqref{tes} with $E= \overline{\mathbf{f}_{c_0}^{-n}(B_{R})}.$

The proof of Lemma  \ref{ncv} is complete.  \end{proof}

\begin{Proof}{of Theorem \ref{jst}}
Since $a$ is a simple centre, there is only one bounded orbit of zero under $\mathbf{f}_a,$ and this orbit is a cycle $(z_i)_0^n.$  For every $c\neq a$ in a neighbourhood $V$ of $a,$ $$J_c^* =\Lambda(\mathcal{F}_c) = \bigcup_{i=0}^{n-1} \Lambda_{i,c},$$   where each $\Lambda_{i,c}$ is a Cantor set with $\Lambda_{i,c} \to \{z_i\}$ as $c\to a$ (in the Hausdorff topology). This follows from Theorem \ref{lqe}, equation \eqref{rcv} and Theorem \ref{ppp}. Since $J_a$ is just the cycle $(z_i)_0^n$ containing zero, we conclude that $c\mapsto J_c^*$ is continuous at $c=a.$ The continuity at points $c_0 \in V-\{a\}$ follows from $J_c^*=\Lambda(\mathcal{F}_c)$ and  Theorem \ref{pcvm} $(c)$.

By Lemma \ref{lkc}, if $c_0$ is in $\mathbb{C} - M_{\beta,0}$ then $J_c^*$ is empty on a neighbourhood of $c_0$ and the map $c \mapsto J^*_c$ is clearly  continuous at $c_0.$ We conclude that $c\mapsto J_c^*$ is continuous at every point of $H_{\beta}.$  

\vspace{0.2cm}

 {\it On the continuity of  $c\mapsto J_c.$ } The continuity of $c\mapsto J_c$ at $c_0=0$ is a delicate subject, since $\mathbb{C} - P_0$ is not hyperbolic. It must be handled separately. Fortunately, this proof has already been presented in \cite[Theorem 4.1]{SS17}. 
From now on, assume $c_0\neq 0.$

\emph{Step 1: preliminary assumptions.} Suppose $c_0$ is in $H_{\beta}.$ Let $E$ be a compact set satisfying the properties stated in Lemma \ref{ncv}. For every $c$ in a neighbourhood of $c_0,$ let $d_c$ denote the hyperbolic distance function of $\mathbb{C} - P_c.$ Since any two conformal metrics are equivalent on compact sets, there is a constant $C>1$ and a neighbourhood $V$ of $c_0$ such that 
\begin{equation}\label{htc} d_c(z, z+ w) \leq C|w|, \end{equation} for every $z$ in $(E)_{\epsilon_0},\  c\in V,$ and $|w| \leq \epsilon_0.$ (Recall that $(A)_{\delta}$ is the set of all points which are within distance $\delta$ of $A,$ with respect to the Euclidean metric.)
 We may reduce $V$ if necessary and consider  a constant $C_1>1$ such that, for every $c\in V,$  
 
  \begin{equation} \label{stu}\{z: d_c(z, E) < 2\epsilon_0/C_1\} \subset (E)_{2\epsilon_0}\end{equation} and 
 \begin{equation} \label{stf}
 C_1^{-1} d_e(z,w) \leq d_c(z,w) \leq C_1 d_e(z,w),  \ \ z,w\in (E)_{2\epsilon_0},
 \end{equation}
where $d_e$ denotes the Euclidean distance. Using Theorem \ref{xtl} and a compactness argument, we find $\mu <1 $ such that, for any $c\in V$ and any holomorphic branch $\varphi: U \to \mathbb{C}$ of $\mathbf{f}_c^{-1}$ with $U\subset (E)_{2\epsilon_0},$ we have 
 \begin{equation} \label{xrs} d_c(\varphi(z), \varphi(w)) < \mu d_c(z,w),   \end{equation}
 for any $z$ and $w$ in $U.$ 

From Theorem \ref{lqp}, the Julia set $J_{c_0}$ is a hyperbolic repeller and  any sufficiently small neighbourhood $U_{c_0}$ of $J_{c_0}$ is backward invariant under $\mathbf{f}_{c_0}.$  Since $J_{c_0}$ is contained in the interior of $E,$ we may fix one such neighbourhood  $U_{c_0} \subset E.$

\noindent Fix $z\in U_{c_0}.$ By Theorem \ref{lqp} , $\alpha(z, \mathbf{f}_{c_0})= J_{c_0}$. Let $(z_i)_0^{\infty}$ be a backward orbit of $z$ under $\mathbf{f}_{c_0}$. Hence $z_0 =z$ and $\mathbf{f}_{c_0}$ sends every $z_{i+1}$ to $z_i$. Since $U_{c_0}$ is backward invariant, every $z_i$ belongs to $E.$

  Let $B_i$ denote the ball centred at $z_i$ of radius $2\epsilon_0/C_1$ with respect to the metric $d_{c_0}.$ Since  $d_{e} \leq C_1 d_{c_0},$  the ball $B_i$  is contained in $(E)_{2\epsilon_0}. $   Hence $B_i$ is contained in $ \mathbb{C} - P_{c_0},$ and for every $i$ there is a univalent branch $\varphi_i$ of $\mathbf{f}_{c_0}^{-1}$ defined on $B_i$ sending $z_i$ to $z_{i+1}.$  Since $\varphi_{i}$ is a contraction, $\varphi_i(B_i)$ is contained in $ B_{i+1}.$

The main idea of the proof is given in the following two steps. Given $0< \epsilon < \epsilon_0$, we shall prove that if $c,c_0\in V$ satisfy
\begin{equation}\label{llt} |c-c_0| \sum_{i=0}^{\infty} \mu^{i} C < \epsilon/C_1, \end{equation} then $d_H(J_c, J_{c_0}) < \epsilon,$ where $d_H$ is the Hausdorff distance obtained from the Euclidean metric.  Equivalently, we shall prove that $J_{c_0} \subset (J_c)_{\epsilon}$ and $J_c \subset (J_{c_0})_{\epsilon},$ establishing the continuity of the Julia set at every point of $H_{\beta}.$

 \emph{Step 2: shadowing.} We are going to define a backward orbit $(w_i)_0^{\infty}$ of $z$ under $\mathbf{f}_c$ satisfying the following conditions: the sequence $(w_i)$ must be contained in  $(E)_{\epsilon_0}$ and  \begin{equation} \label{lgbm} d_{c_0}(w_{i}, z_{i}) < \mu C|c-c_0| + \mu^2C|c-c_0| + \cdots + \mu^iC|c-c_0|, \end{equation}
  for every $i>0.$   The proof will be given by induction. Let $w_0=z.$ In order to define $w_1,$ we shall use the  following fact: for any branch $\varphi$  of $\mathbf{f}^{-1}_{c_0},$ the map   $$z \mapsto \varphi(z-c+c_0)$$ is a branch of $\mathbf{f}^{-1}_c$.   By  \eqref{htc} and \eqref{llt}, the distance $d_{c_0}(w_0 -c +c_0,  w_0)$ is not greater than $C|c-c_0| < \epsilon/C_1.$  Therefore, $w_0-c+c_0$ is in the domain of $\varphi_0$ and we may define $w_{1}$ as $\varphi_0(w_0 -c+c_0).$ It follows that  $\mathbf{f}_c$ sends $w_1$ to $w_0.$    
Since $\varphi_0$ contract distances  on $(E)_{2\epsilon_0},$ from \eqref{xrs}  and \eqref{llt} we have 
$$d_{c_0}(w_{1}, z_{1}) < \mu d_{c_0}(w_0 -c+c_0, w_0) \leq \mu C |c-c_0| < \epsilon/C_1. $$

\noindent Since $d_e\leq C_1 d_{c_0}$ and $z_1 \in E,$ the point $w_1$ belongs to $(E)_{\epsilon}.$
 Hence \eqref{lgbm} is true for  $i=1.$ If \eqref{lgbm} is true  for $i=n,$ then  \begin{equation}\label{orbn} d_{c_0}(w_n -c+c_0, z_n) \leq C|c-c_0| + d_{c_0}(w_n, z_n) \leq \sum_{j=0}^{n} \mu^j C|c-c_0| < \frac{\epsilon}{C_1},  \end{equation}

\noindent and so $w_n -c+c_0$ is in $B_n.$ Since the domain of $\varphi_n$ contains $B_n$, we may define $w_{n+1}$ by $\varphi_n(w_n -c+c_0).$  It follows from \eqref{orbn} that  $$d_{c_0}(w_{n+1}, z_{n+1}) < \mu d_{c_0}(w_n -c+c_0, z_n) \leq  \sum_{j=1}^{n+1}\mu^j C|c-c_0| < \epsilon/C_1.$$ Since $d_e \leq C_1 d_{c_0},$ the point $w_{n+1}$ is in $(E)_{\epsilon_0}.$ 
This completes the induction process and Step 2.

\emph{Step 3: continuity.} It follows from Step 2 that, for  every limit point $z_*$ of $(z_j)$  there is a limit $w_*$ of  a subsequence of $(w_j)$ such that $d_{c_0}(w_*, z_*) < \epsilon/C_1.$ Since $(z_j)$ is arbitrary, and taking into account that $\alpha(z,\mathbf{f}_{c_0}) = J_{c_0}$ and $\alpha(w_0, \mathbf{f}_c) = J_c,$ we conclude that $$J_{c_0}  \subset \{z\in \mathbb{C}: d_{c_0}(z, J_c) < \epsilon/C_1 \} \subset (J_c)_{\epsilon}.$$
Starting with $c$ instead of $c_0,$ the  same argument of Step 2 can be applied to construct a shadowing  of  any backward orbit under $\mathbf{f}_{c}.$   In the same way, we obtain $J_c \subset (J_{c_0})_{\epsilon}.$  

The proof of Theorem \ref{jst} is complete.   
\end{Proof}

\appendix

\section{List of symbols}

\begin{enumerate}[noitemsep, label=$\alph*.$]
       \item  The exponent   $\beta$  of the family $\mathbf{f}_c$  is a rational number $\frac{p}{q}>1$ (see  page   \pageref{tvbc});  
 \label{kfbew}     $\mathbf{f}_c$ is a holomorphic correspondence and $ R>0$  is an escaping radius (page \pageref{podemg}); 
   $z \mapsto z_1 \mapsto \cdots$ is a forward orbit (page \pageref{pwerf});

 \item    $\mathbf{f}_c^{-1}(w) $  and     $\mathbf{f}_c(z)$ are defined by  $\{z: (w-c)^q=z^p\}$ and $\{w: (w-c)^q=z^p\}$ (page \pageref{pdmge}); 
$\mathbf{f}_c(A)$ and  $\mathbf{f}_c^{-1}(A)$ are defined by $\bigcup_{z\in A} \mathbf{f}_c(z)$  and  $\bigcup_{z\in A} \mathbf{f}_c^{-1}(z)$ (page \pageref{phgen});
$A \Subset B$  means that  $A\subset E\subset B$ for some compact set $E$ (page \pageref{gdgihg});

  \item $M_{\beta}$ is the  connectedness locus and \  $M_{\beta,0}=\{c \in \mathbb{C}: 0 \in K_c\}$ (page \pageref{qwegn}); every open ball centred at zero is denoted by  $B_r=\{z\in \mathbb{C}: \ |z| < r\}$ (page \pageref{ugnew});
 $\mathbf{g}_c$ is defined by   $\mathbf{g}_c(z)=\mathbf{f}_c(z)\cap K_c,$ where $K_c$ is the filled Julia set  of $\mathbf{f}_c$ (page  \pageref{gueng});
   $\varphi$ is a  branch of $\mathbf{f}_c$ or $\mathbf{f}_c^{-1}$ (page \pageref{pjyfk});

     \item   $\mathcal{S}_{c,d}$ is the sequence of maps $B_d \xrightarrow{\mathbf{f}_c} D_{1,c} \xrightarrow{\varphi_{1,c}}  \cdots \xrightarrow{\varphi_{n-1,c}} D_{n,c}$  (page
   \pageref{poihyt}); 
   $J_c$ and  $J_c^*$ are the Julia set and dual Julia set of $\mathbf{f}_c$ (page   \pageref{qegyc});
      $\operatorname{dom}(\mathcal{F}_c)$  is the  domain of a non-critical system (page \pageref{pitgk});

   \item    $\operatorname{dom}(\mathcal{A}_c)$  is the domain of a critical system (page \pageref{sdgrw});
   $\mathcal{F}_{c}$  is a non-critical system and   $\mathcal{A}_{c}$  is a critical system (page \pageref{sdgrw});
   $\omega(z, \mathbf{g}_c)$ is the $\omega$-limit set under $\mathbf{g}_c$ (page   \pageref{pogew});

    \item   $\Lambda(\mathcal{F}_c)$  is the limit set of a non-critical system (page  \pageref{rcv});
$P_c$  is the post-critical set of $\mathbf{f}_c$ (page  \pageref{qetdbc});
$H_{\beta}$ is  a subset consisting of hyperbolic parameters (page  \pageref{sdboidg});

    \item     $d_c$ is the  hyperbolic distance function of $\hat{\mathbb{C}} - P_c$ (page \pageref{lkghegd});
  $\alpha(z, \mathbf{f}_c)$ is the  $\alpha$-limit set of $z$ (page \pageref{wcmg});
$\mathbf{h}_c$ denotes a   branched holomorphic motion (page  \pageref{gehgds});
$\{ |\mathbf{f}_c(z)| >r\}$ is a short version of the set  $\{z\in \mathbb{C}: |w| >r  \ \textrm{for any}  \  w\in \mathbf{f}_c(z)\}$ (page    \pageref{huiji});

    \end{enumerate}

 \section*{Acknowledgments.}  Research  partially supported by the  grants 2016/16012-6 S\~ao Paulo Research Foundation, and CNPq 232706/2014-0. The author would like to thank Daniel Smania for many discussions concerning the central problems of this article, and Shaun Bullett, Sylvain Bonnot,  Christopher Penrose, and Luna Lomonaco for the valuable comments and suggestions.

   This paper has been written during  the author's postdoctoral stage at  Imperial College London and  University of S\~ao Paulo:  we are very grateful to Sebastian van Strien and Edson de Faria (resp.).

Approximately one-fourth of this paper originated from \cite{Carlos}; some other parts have been announced (without proofs) in \cite{BLS}.

\bibliographystyle{abbrv}

{\small
\bibliography{oi}  }

\vspace{1cm}

  C.~Siqueira, \textsc{Department of Mathematics, Federal University of Bahia, Salvador, Bahia, 
    Brazil, CEP  40170-115}\par\nopagebreak
  \textit{E-mail address}, C.~Siqueira: \texttt{carlos.siqueira@ufba.br}

\end{document}